\documentclass[11pt]{amsart}
\usepackage{amsmath,amssymb,amsfonts,amscd,epsfig}

\theoremstyle{plain}
\newtheorem{thm}{Theorem}
\newtheorem{lem}[thm]{Lemma}
\newtheorem{con}[thm]{Conjecture}
\newtheorem{cor}[thm]{Corollary}
\newtheorem{prop}[thm]{Proposition}
\newtheorem{remark}[thm]{Remark}
\newtheorem{defn}[thm]{Definition}



\newcommand{\A}{{\mathbb A}}

\newcommand{\FF}{{\mathbb F}}

\newcommand{\NN}{{\mathbb N}}

\newcommand{\QQ}{{\mathbb Q}}
\newcommand{\RR}{{\mathbb R}}

\newcommand{\ZZ}{{\mathbb Z}}

\title{Modular forms in Quantum Field Theory}
\author{Francis Brown and Oliver Schnetz}
\begin{document}
\begin{abstract}
The amplitude of a Feynman graph in Quantum Field Theory is related to the point-count over finite fields of the corresponding graph hypersurface. 
This article reports on an experimental study of point counts over $\FF_q$ modulo $q^3$,
for graphs  up to loop order 10. It is found that many of them 
are given by  Fourier coefficients of  modular forms of weights $\leq 8$ and levels $\leq17$. 
\end{abstract}
\maketitle
\section{Introduction}
We first explain the definition of the $c_2$-invariant of a graph and its connection to its period. Then we turn to modularity and our results.
\subsection{The $c_2$-invariant}
Let $G$ be a connected graph. The graph polynomial of $G$ is defined by associating a variable $x_e$ to every edge $e$ of $G$ and setting
\begin{equation}\label{1}
\Psi_G(x)=\sum_{T\,\rm span.\,tree}\;\prod_{e\not \in T}x_e,
\end{equation}
where the sum is over all spanning trees $T$ of $G$. These polynomials first appeared in Kirchhoff's work on currents in electrical networks \cite{KIR}.

The set of finite graphs $G$ is filtered by the maximal degree of the set of  vertices of $G$. We say that 
\begin{equation} 
G \hbox{  is in } \phi^n  \, \hbox{theory}   \hbox{ if }   \deg(v) \leq n  \, \hbox{ for all vertices } v \hbox{ of } G
\end{equation}
and we will mostly restrict to the physically meaningful case  of $\phi^4$.

The arithmetic content of perturbative Quantum Field Theories
is given by  integrals of rational functions, whose denominator is the square of the graph polynomial.
This requires  a convergency condition for the graphs. A connected graph $G$  is called primitive-divergent (see Fig.\ 1) if
\begin{eqnarray} \label{defnprimdiv}  N_G  & = & 2h_G,  \\
N_{\gamma} &> &2 h_{\gamma} \quad \hbox{ for all non-trivial strict subgraphs } \gamma \subsetneq G \ ,\label{defnprimdiv1}
\end{eqnarray}
where $h_{\gamma}$ denotes the number of loops (first Betti number) and $N_{\gamma}$ the number of edges in a graph $\gamma$.
It is easy to see that primitive-divergent graphs with at least three vertices are simple: they have no multiple edges or self-loops.

\begin{figure}[ht]
\epsfig{file=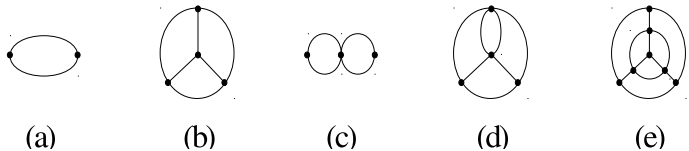,width=\textwidth}
\caption{Graphs (a) and (b) are primitive-divergent whereas graphs (c), (d), and (e) have subdivergences.}
\end{figure}

If $G$ is primitive-divergent, the period of $G$ is defined by the  convergent  integral \cite{BEK}, \cite{BROWN}   (which is independent of the choice of edge $N_G$)
\begin{equation}\label{2}
P(G)=\int_0^\infty\cdots\int_0^\infty\frac{{\rm d}x_1\cdots{\rm d}x_{N_G-1}}{\Psi_G(x)^2|_{x_{N_G}=1}}\in\RR_+. 
\end{equation}
In this way,  $P(G)$ defines a map from the set of primitive-divergent graphs to positive real numbers. In  the case of $\phi^4$ theory they  are renormalization-scheme
independent contributions to the $\beta$-function \cite{IZ}.

Since $(\ref{1})$ is defined over the integers, it defines an affine scheme  of finite type over $\mathrm{Spec}\, \ZZ$ which is called  the graph hypersurface $X_G \subset  \A^{N_G}$.
For any field $k$, we can therefore consider the zero locus $X_G(k)$ of $\Psi_G$ in $k^{N_G}$.
If the ground field $k\cong \FF_q$ is finite, we have the point-counting function
\begin{equation}\label{2a}
[X_G]_q : q \mapsto \# X_G(\FF_q) \in \NN.
\end{equation}
It defines a map from prime powers to non-negative integers.
Inspired by the appearance of multiple zeta values  in the period integral \cite{BK}, Kontsevich informally conjectured in 1997 that the function $[X_G]$ might be polynomial
in $q$ for all graphs \cite{KONT}. Although the conjecture is true for small graphs \cite{STEM} and for certain sets of nearly complete graphs \cite{STA}, \cite{CY}, it is false in general.
In \cite{BB} Belkale and Brosnan used Mn\"ev's universality theorem to prove that the $[X_G]$ for arbitrary graphs are, in a certain sense, of general type.
In \cite{BROWN}, \cite{BS} we proved the conjecture is true for some infinite families of graphs in $\phi^4$ theory.

Nonetheless a connection between the point-counting function and the period  $(\ref{2})$ remains valid in all cases. Recent work \cite{SchnetzFq}, \cite{BS}, \cite{BD}
shows that  certain information about the period is indeed detected by a small piece of the point-counting function $[X_G]_q$, called the $c_2$-invariant. 
\begin{prop}\label{prop0} \cite{BS}
If $G$ has at least three vertices  there exists a quantity
$$ c_2(G) = (c_2(G)_q)_{q} \in \prod_{q} \ZZ / q \ZZ$$
where $q$ ranges over the set of prime powers  and $c_2(G)_q \in \ZZ/q \ZZ$ is defined as follows. One shows that $[X_G]_q \equiv 0 \mod q^2$, and sets
\begin{equation}\label{2b}
c_2(G)_q \equiv [X_G]_q q^{-2} \mod q.
\end{equation}
\end{prop}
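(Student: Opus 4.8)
The plan is to establish the single congruence $[X_G]_q\equiv 0\pmod{q^2}$; once this is known the quantity $c_2(G)_q$ is simply \emph{defined} by $(\ref{2b})$, and $c_2(G)$ is the resulting collection of residues, so no further work remains. The two structural facts I would exploit throughout are that $\Psi_G$ is \emph{homogeneous} of degree $h_G$ and has degree \emph{exactly one} in each edge variable $x_e$ — both immediate from the spanning-tree definition $(\ref{1})$, since each edge occurs at most once in every monomial $\prod_{e\notin T}x_e$.

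First I would dispose of the trivial cases: if $G$ is a tree then $\Psi_G=1$ and $X_G=\emptyset$, while if $G$ has a cycle and at least three vertices then it has at least three edges, so $N_G\geq 3$. Using a fixed nontrivial additive character $\psi$ of $\FF_q$, the count splits as $[X_G]_q=q^{N_G-1}+q^{-1}S$ with $S=\sum_{t\in\FF_q^\times}\sum_{x\in\FF_q^{N_G}}\psi(t\,\Psi_G(x))$. Because $N_G\geq 3$ the term $q^{N_G-1}$ is already divisible by $q^2$, so the problem reduces to showing $S\equiv 0\pmod{q^3}$, i.e.\ to extracting three factors of $q$ from the character sum.

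The engine for this is the degree-one dependence on each variable. Singling out an edge $e$ I write $\Psi_G=x_e\,\Psi_{G/e}+\Psi_{G\setminus e}$, where $\Psi_{G/e}=\partial\Psi_G/\partial x_e$ and $\Psi_{G\setminus e}=\Psi_G|_{x_e=0}$ are Kirchhoff polynomials of the contraction and deletion, independent of $x_e$. Summing the inner sum over $x_e$ gives $\sum_{x_e}\psi(t\Psi_G)=q\,\psi(t\Psi_{G\setminus e})\,\mathbf 1[\Psi_{G/e}=0]$, extracting one factor of $q$ at the cost of imposing the vanishing of a Dodgson polynomial. In point-count language this is deletion–contraction, $[X_G]_q=q^{\,N_G-1}-[X_{G/e}]_q+q\,\#\bigl(X_{G/e}\cap X_{G\setminus e}\bigr)$. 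Each such summation yields a $q$, but simultaneously constrains the remaining variables, so the factors are not freely multiplicative and one cannot simply read off $q^3$.

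The main obstacle is to extract the \emph{second} independent factor of $q$ — exactly the factor that distinguishes three vertices from two, where the statement genuinely fails (two vertices joined by two edges give $\Psi_G=x_1+x_2$ and $[X_G]_q=q\not\equiv 0\pmod{q^2}$). I would obtain it by iterating the reduction on a second edge $f$, landing on the second-order Dodgson polynomials, and then proving the needed congruence for the correction term $\#\bigl(X_{G/e}\cap X_{G\setminus e}\bigr)\bmod q$. A crude Chevalley--Warning degree count is too weak: the two Dodgson polynomials have degrees $h_G$ and $h_G-1$ in $N_G-1$ variables, which sits exactly at the borderline $N_G=2h_G$ of primitive-divergent graphs. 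The argument must therefore re-use linearity — since $G$ has at least three vertices there remain edge variables in which both polynomials are still of degree one, and summing over such a variable produces the remaining factor of $q$. The contraction–deletion identities among Dodgson polynomials are what guarantee that the resulting constraint loci match up so that nothing survives modulo $q^3$; assembling these cancellations is the technical heart of the proof.
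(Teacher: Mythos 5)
First, a point of reference: the paper itself does not prove this proposition --- it is imported verbatim from \cite{BS} (Prop.-Def.~18 there), so the only content to check is your argument for the congruence $[X_G]_q\equiv 0 \bmod q^2$. Your framing is the right one and is essentially the one used in \cite{BS}: multilinearity of $\Psi_G$, the deletion--contraction splitting $\Psi_G=x_e\Psi_{G/e}+\Psi_{G\setminus e}$, the resulting identity $[X_G]_q=q^{N_G-1}-[X_{G/e}]_q+q\,\#(X_{G/e}\cap X_{G\setminus e})$, and the two-vertex banana showing the hypothesis is sharp.

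There is, however, a genuine gap exactly where you write that ``assembling these cancellations is the technical heart of the proof'': the heart is missing, and it cannot be supplied by multilinearity together with contraction--deletion identities, which is all your sketch invokes. Multilinearity alone is provably insufficient: $f=x_1x_2+x_3x_4$ is homogeneous and of degree one in each of its four variables, yet $[f]_q=(2q-1)^2+(q-1)^3=q^3+q^2-q\not\equiv 0\bmod q^2$. The special feature of graph polynomials that rescues the argument is the quadratic Dodgson identity $\Psi^{1,1}_{G,2}\Psi^{2,2}_{G,1}-\Psi^{12,12}_{G}\Psi_{G,12}=(\Psi^{1,2}_{G})^2$ for the polynomials $(\ref{Dogsondefn})$. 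Writing $\Psi_G=\Psi^{12,12}_G x_1x_2+\Psi^{1,1}_{G,2}x_1+\Psi^{2,2}_{G,1}x_2+\Psi_{G,12}$, the fibre count over $(x_1,x_2)$ is governed by the vanishing of the resultant $\Psi^{1,1}_{G,2}\Psi^{2,2}_{G,1}-\Psi^{12,12}_G\Psi_{G,12}$; precisely because this resultant is a perfect square, its zero locus is the hypersurface $\{\Psi^{1,2}_G=0\}$ rather than a genuine quadric, and that is what produces the extra power of $q$ --- the same mechanism that, in the quasi-constant lemma of Sect.~\ref{results}, makes a non-square discriminant contribute a residue symbol rather than zero. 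Your closing step, ``summing over a variable in which both polynomials are degree one produces the remaining factor of $q$,'' fails as stated: two linear constraints in one variable are simultaneously solvable exactly on the zero locus of their resultant, which returns you to the problem the Dodgson identity solves. Until that identity (or an equivalent input special to graph polynomials) is introduced and the resulting case analysis is carried out, the proposal is a plan, not a proof.
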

In the case when $[X_G]_q$ is a polynomial in $q$, the $c_2$-invariant is simply the reduction mod $q$ of  the coefficient of $q^2$  in this polynomial, and so  there is an integer $M$ such that
$c_2(G)_q \equiv M \mod q$ for all $q$. When this happens, we say that the $c_2$-invariant is constant. In other words, the $c_2$-invariant is said to be constant if and only if it  is in the image of the map
$$\ZZ \rightarrow \prod_{q} {\ZZ / q \ZZ}$$
Any graph $G$ with a non-constant $c_2$-invariant is therefore a counter-example
to Kontsevich's problem. The connection between the period and the $c_2$-invariant is further borne out by the following conjecture, which holds in all presently known examples: 
\begin{con}\label{con2}
If $P(G_1)=P(G_2)$ for primitive divergent graphs $G_1$, $G_2$, then $c_2(G_1)_q\equiv c_2(G_2)_q\mod q$ for all prime powers $q$.
\end{con}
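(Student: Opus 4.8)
The plan is to route the statement through the theory of motives, since both the period $P(G)$ and the $c_2$-invariant should be two realizations of a single motivic object attached to $G$. First I would attach to each primitive-divergent graph its graph motive: the relative cohomology of the complement of the projective graph hypersurface relative to the coordinate boundary simplex, framed so that $P(G)$ is literally a period of this object, namely the pairing of the algebraic differential form built from $\Psi_G^{-2}$ against the Betti cycle given by the positive real simplex. This is the Bloch--Esnault--Kreimer picture underlying \cite{BEK}, \cite{BROWN}. In this language the hypothesis $P(G_1)=P(G_2)$ becomes a coincidence of two specific period numbers.

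The second step is to invoke Grothendieck's period conjecture (equivalently, the expected injectivity of the period pairing on the motivic Galois torsor): the $\QQ$-linear relation $P(G_1)-P(G_2)=0$ among periods should be forced to come from a motivic source, i.e.\ a morphism or common subquotient relating the two framed graph motives. Granting such a motivic identification, all realizations must be compatible; in particular the $\ell$-adic \'etale realizations carry the same Frobenius action for almost all primes.

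The third step is to connect $c_2$ to this Frobenius data. By the Grothendieck--Lefschetz trace formula, $[X_G]_q$ is an alternating sum of traces of Frobenius on the \'etale cohomology of $X_G\otimes\overline{\FF_q}$, and the congruence $[X_G]_q\equiv 0 \bmod q^2$ of Proposition \ref{prop0} reflects the vanishing of the lowest-weight contributions; following the point-count analysis of \cite{SchnetzFq}, \cite{BS} one then reads $c_2(G)_q$ off as the trace of Frobenius on the weight-two graded piece, i.e.\ the codimension-two Tate-twisted part of the motive. Since that piece is a realization-level invariant of the motive matched in step two, the identification yields $c_2(G_1)_q\equiv c_2(G_2)_q \bmod q$ for almost all $q$; I would finally argue that the excluded bad primes are controlled because the family $(c_2(G)_q)_q$ is governed $\bmod q$ by a single $\ell$-adic sheaf, forcing agreement at the remaining primes as well.

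The principal obstacle is step two: Grothendieck's period conjecture is open, so one cannot at present deduce a motivic identity from a mere numerical coincidence of periods. Even granting it, there is a genuine subtlety that keeps the statement a conjecture rather than a theorem. The number $P(G)$ is only one entry of the full period matrix, and the relation $P(G_1)=P(G_2)$ is predicted to arise from a motivic relation that may link $\mathrm{mot}(G_1)$ and $\mathrm{mot}(G_2)$ only through a proper subquotient. The hard part will therefore be to show that the particular Betti framing carrying $P(G)$ already determines the weight-two graded piece that $c_2$ detects, so that two graphs known to share only this single period number are nonetheless compelled to share the relevant Frobenius trace. Making this implication unconditional, rather than conditional on the full period conjecture together with a compatibility between the framing and the weight filtration, is exactly the gap between the heuristic above and a complete proof.
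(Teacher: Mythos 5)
This statement is Conjecture~\ref{con2} of the paper, and the paper does not prove it: it is stated as an open conjecture, supported only by the fact that it holds in all presently known examples and by the result of \cite{BD} relating the $c_2$-invariant to the de Rham framing on the cohomology of the graph hypersurface given by the integrand of (\ref{2}). So there is no ``paper proof'' to match your argument against; the relevant comparison is whether your heuristic is the one the authors have in mind, and it essentially is. Your three-step outline (realize $P(G)$ as a period of a framed graph motive \`a la \cite{BEK}, \cite{BROWN}; use period-conjecture-type injectivity to promote the numerical identity $P(G_1)=P(G_2)$ to a motivic one; read off $c_2$ as Frobenius data on a graded piece of that motive) is precisely the philosophy behind the conjecture and behind \cite{BD}. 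You also correctly identify the two central obstructions: Grothendieck's period conjecture is open, and even granting it one must show that the single framed period detected by $P(G)$ controls the specific weight-graded piece that $c_2$ sees.

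That said, since you present this as a proof attempt, I should flag that beyond the gaps you acknowledge there is a further unsound step at the end. Even if the $\ell$-adic \'etale realizations of the two graph motives were identified, that would at best control traces of Frobenius at primes $p\neq\ell$ of good reduction, up to semisimplification; it would not yield the congruence $c_2(G_1)_q\equiv c_2(G_2)_q \bmod q$ for \emph{all} prime powers $q=p^n$, because the residue of $[X_G]_q/q^2$ modulo $q$ is $p$-adic (crystalline/integral) information at the prime $p$ itself, which a single $\ell$-adic sheaf does not govern. Handling the bad and small primes, and the passage from an isomorphism of rational realizations to an integral congruence, would require a separate argument at every $p$. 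Combined with the dependence on the period conjecture, this means your proposal is a (well-aligned) heuristic rather than a proof --- which is consistent with the fact that the paper itself leaves the statement as a conjecture.
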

This conjecture is supported by \cite{BD}, where it is shown that, for a large class of graphs,  the $c_2$-invariant is related to the  de Rham framing on the cohomology of the graph hypersurface
given by the integrand of (\ref{2}).

\subsection{Which motives for quantum field theory?} Since graph polynomials are not polynomially countable \cite{BB, BS}, and more to the point,  the period $(\ref{2})$  does not factorize
through a category of mixed Tate motives \cite{BD}, it follows from standard conjectures in transcendence theory  that the integral  $(\ref{2})$ will not be a multiple zeta value in general.
An important question is to try to ascertain which families of periods do occur as values of   $(\ref{2})$, especially when one places physically meaningful restrictions  on the graphs $G$. Due to the immense difficulty of computing the periods (\ref{2}) directly,  or even to obtain any non-trivial information about the  mixed Hodge structure or motive  \cite{BEK} underlying (\ref{2})
beyond 6 loops, one is forced to find new methods to probe the arithmetic content of $\phi^4$ theory at high loop orders.
The goal of this paper is to argue that the $c_2$-invariant gives an effective method to do just this, and  to  report  
on an experimental  study of all $c_2$-invariants of primitive $\phi^4$-graphs with up to and including $10$ loops.

Note that a naive approach to computing the point-counting functions $[X_G]_q$, for any $q$, is completely impossible: at ten loops
the graph polynomials $\Psi_G$ are of degree 10 in 20 variables, and have thirty to forty thousand monomials. Furthermore, there 
are several thousand primitive divergent graphs. The main point is that the $c_2$-invariant satisfies sufficiently many combinatorial properties to reduce 
this to a manageable computation. 

A first reduction, which uses  graph-theoretical arguments together with a weakened version of Conj.\ \ref{con2} (see Conj.\ \ref{con4})
allows us to reduce the number of relevant cases to 284 `prime ancestors' (see Thm.\ \ref{thm3}). The number of prime ancestors at a certain loop order is listed in Tab.\ 2.
A second, crucial,  reduction is Thm.\ \ref{thm4}, which reduces the $c_2$-invariant of the hypersurface $X_G$ to 
that of a hypersurface of much smaller dimension, for which the points can be counted for at least the first 6 primes in all  284 cases. 
This is enough to distinguish 145 $c_2$-invariants which are listed in Tab's.\ 7 and 8 at the end of the article. For graphs  up to 9 loops  we computed the $c_2$-invariants of
their prime ancestors for the first 12 primes (Tab.\ 6).

\subsection{Findings}
For small graphs (with $\leq 6$ loops \cite{STEM}) the $c_2$-in\-variant is constant and it is known that the integral $(\ref{2})$  is a linear combination of multiple zeta values \cite{BK}, \cite{CENSUS}.
At 7 loops, we find the first examples of $c_2$-invariants which are quasi-constant \cite{DORY1, SchnetzFq}. This means  that the $c_2$ is constant after  excluding finitely many primes, or by passing to a finite field extension  (Def.\ \ref{def7}). Experimentally, we find  that for graphs in $\phi^4$ theory up to 10 loops,  only the constants $c_2 =0,-1$, and  three other  quasi-constants can occur  corresponding to extending by  2nd, 3rd and 4th roots of unity (Conj.\ \ref{con5}).
All these examples correspond to Tate motives.

The first non-Tate examples occur at 8 loops.  We say that  a $c_2$-invariant is \emph{modular} if the   point-counts  $c_2(G)_p$  over finite fields $\FF_p$ where $p$ is prime,   coincide with the $p$th Fourier coefficients
of a modular form (possibly with a non-trivial character). In \cite{BS} we proved using modularity theorems for singular K3 surfaces that a certain graph with 8 loops is modular for all primes $p$.
No such theorem  is currently available for any other $c_2$-invariant. Therefore, we shall abusively say that a graph $G$ is modular if  $c_2(G)_p$ coincides with the Fourier coefficients of a modular form for small $p$. In practice, computing just a handful of primes is enough
to fix the character of $c_2$ uniquely: for instance, the likelihood of a false identification after counting points over the first 11 primes is of the order of one in 
 $| \FF_2 \times \ldots \times \FF_{31}| \sim 2 \times 10^{11}$. Thus we can be fairly confident that our  experimentally-modular graphs
(modular for at least the first 11 primes) are indeed modular for all primes.

Motivated by the 8-loop example, we searched for other modular examples of primitive-divergent graphs $G$ in $\phi^4$ theory up to and including 10 loops.
The output of this search is  summarized in Tab.\ 1. In total 16 out of the (at least) 145 $c_2$-invariants of graphs
with $\leq10$ loops in $\phi^4$ theory are modular for (at least) the first 11 primes. The modular forms that correspond to these $c_2$-invariants are in heavy boxes in Tab.\ 1.

One immediately notices from Tab. 1  that the modular forms coming from  $\phi^4$ graphs have very low levels and never have weight 2.
With our data we can rule out (assuming Conj.\ \ref{con4}) weight 2 modularity for all levels up to 1200 in graphs with $\leq$ 10 loops.
This  absence of weight 2 should be a general property of primitive $\phi^4$-graphs  (Conj.\ \ref{con6}). Moreover, we only found levels 7, 8, and 12 at weight 3. Remarkably, all these modular forms were already found in graphs with not more than 9 loops (the lower index in the boxes). At weight 4
we found modular forms of level 5, 6, 7, 13, and 17. At weight 5 only level 4 could be identified. At weight 6 we had modular forms of levels 3, 4, 7, 10, whereas at weights
7 and 8 levels 3 and 2, 5 could be found. We observe a tendency to higher weight for higher loop order. Otherwise we are not aware of any particular pattern in the identified modular forms.

In order to investigate the effect of the topology of the graph on the $c_2$-invariant, we also computed the $c_2$-invariants of all log-divergent graphs
(graphs which satisfy (\ref{defnprimdiv}) and (\ref{defnprimdiv1}), but which are not necessarily in $\phi^4$ theory, see Fig.\ 6) up to and including 9 loops.
These additional graphs give rise to the first three levels of weight 2 and fill in the gap at level 11 of weight 3. They are shown in light boxes in Tab.\ 1.
Moreover,
already at 8 loops there exists a non-$\phi^4$ graph with a quasi-constant $c_2$-invariant which does not occur in the set of  $\phi^4$ graphs with up to 10 loops.
Its point-count (\ref{24}) is given by the number of zeros of  $x^2+x-1$,  which is not cyclotomic.
In conclusion, it seems that the point-counting functions of `physical graphs' (i.e.\ in $\phi^4$) are highly constrained compared to the set of all graphs.
\vskip5mm

\begin{tabular}{l|lllllll}
weight&2&3&4&5&6&7&8\\\hline
level&
\setlength{\fboxrule}{0.1mm}\fbox{11}$_{\rm \phi^{>4},\,9}^\eta$&\setlength{\fboxrule}{0.4mm}\fbox{7}$_8^\eta$&\setlength{\fboxrule}{0.4mm}\fbox{5}$_8^\eta$&
\setlength{\fboxrule}{0.4mm}\fbox{4}$_9^\eta$&\setlength{\fboxrule}{0.4mm}\fbox{3}$_8^\eta$&\setlength{\fboxrule}{0.4mm}\fbox{3}$_9$&\setlength{\fboxrule}{0.4mm}\fbox{2}$_{10}^\eta$\\
&\setlength{\fboxrule}{0.1mm}\fbox{14}$_{\rm \phi^{>4},\,9}^\eta$&\setlength{\fboxrule}{0.4mm}\fbox{8}$_8^\eta$&\setlength{\fboxrule}{0.4mm}\fbox{6}$_9^\eta$&
7&\setlength{\fboxrule}{0.4mm}\fbox{4}$_9^\eta$&7&3\\
&\setlength{\fboxrule}{0.1mm}\fbox{15}$_{\rm \phi^{>4},\,9}^\eta$&\setlength{\fboxrule}{0.1mm}\fbox{11}$_{\rm \phi^{>4},\,9}$&\setlength{\fboxrule}{0.4mm}\fbox{7}$_{10}$&
8&5&8&\setlength{\fboxrule}{0.4mm}\fbox{5}$_{10}$\\
&17&\setlength{\fboxrule}{0.4mm}\fbox{12}$_9^\eta$&8&11&6&11&6\\
&19&15&9&12&\setlength{\fboxrule}{0.4mm}\fbox{7}$_9$&15&7\\
&20&15&10&15&8&15&8\\
&21&16&12&15&9&16&8\\
&24&19&\setlength{\fboxrule}{0.4mm}\fbox{13}$_9$&19&\setlength{\fboxrule}{0.4mm}\fbox{10}$_{10}$&19&9\\
&26&20&$\vdots$&20&10&20&10\\
&26&20&\setlength{\fboxrule}{0.4mm}\fbox{17}$_{10}$&20&10&20&12
\end{tabular}
\vskip5mm

\noindent Table 1: Newforms of low level with rational Fourier coefficients and their first appearance in $\phi^4$-theory (see main text).
The lower index indicates the lowest loop order at which they occur, an upper index $\eta$ indicates that the modular form is an  $\eta$-product, see Tab.\ 3 in Sect.\ \ref{results}.
A subscript $`\phi^{>4}, 9$' denotes the modular form of a graph which does not lie in $\phi^4$ theory (i.e.\ which has a vertex with valency greater than four) with 9 loops.
The table does not include any non-$\phi^4$ 10 loop graphs.
\vskip5mm

Besides the modular and (quasi-)constant examples, there are many $c_2$-invariants which we were unable to undentify. 
Up to 8 loops $\phi^4$ theory is  fully quasi-constant or modular, but  at 9 loops there are 10 $c_2$-invariants which are neither quasi-constant nor modular of low level.
Their sequences are listed for the first 12 primes in Tab.\ 6. At 10 loops we have another 114 unidentified sequences which are listed for the first 6 primes in Tab.\ 8.
In one case it is possible to reduce an unidentified $c_2$-invariant ($i_{101}$ in Tab.\ 8) to the (affine) point-count of a
4-fold which is the projective zero locus of the degree 6 polynomial (\ref{23}). The $i_{101}$ $c_2$-invariant is listed for the first 100 primes in Tab.\ 5, and the $c_2$-invariants
of 7 other  accessible cases are listed for the first 50 primes.

A surprising consequence of our findings is the following  trichotomy for graphs in $\phi^4$ theory. Up to 10 loops they 
fall into three categories:
\begin{enumerate}
\item Graphs $G$ with $c_2$-invariant equal to $-1$. These appear to have a unique prime ancestor: namely the wheel with 3 spokes (whose completion is the  graph  $K_5$). In other words, this entire class is generated 
\emph{by a single graph} by completion and double-triangle operations.
\item Graphs which have $c_2$-invariant equal to $0$. These graphs are expected to have weight-drop and therefore contribute to the perturbative expansion in a quite different way from 
the previous class.
\item Graphs with non-constant $c_2$-invariants. These start at 7 loops and are all counter-examples to Kontsevich's conjecture. This class contains the modular graphs described above.
\end{enumerate}
The last set of columns (loop order $\ell$) in Tab.\ 7 suggests that the wheel with 3 spokes  is the only graph in $\phi^4$ theory which plays a distinguished role.
In particular, there are many prime ancestors with vanishing $c_2$-invariant (the smallest of which is the $K_{3,4}$ graph at loop order 6). In \cite{BY}, several families of graphs with
vanishing $c_2$-invariant were constructed.
\\

\noindent{\bf Acknowledgements.} We are very grateful to Jonas  Bergstr\"om for correspondence 
on Siegel modular forms. The article was written while both authors were visiting scientists at Humboldt University, Berlin.
The computations were performed on the Erlanger RRZE Cluster. Francis Brown is partially supported by ERC grant 257638. 

\section{Equivalence classes of graphs}\label{GN}
\subsection{Completed primitive graphs}
The map $(\ref{2})$ from graphs to periods satisfies various identities  which we review here. 
Recall that a graph is $4$-regular if every vertex has degree $4$.  

\begin{defn}\label{def1} Let $\Gamma$ be a connected $4$-regular graph with at least 3 vertices. We say that $G$ is 
completed primitive if every 4-edge cut of $\Gamma$ is either connected,  or has a component consisting of  a single vertex.
\end{defn}

The simplest completed primitive graph is a 3-cycle of double edges. There exists no completed primitive graph with 4 vertices but there is
a unique completed primitive graph with 5 vertices, the complete graph $K_5$, and a unique one with 6 vertices, the octahedron $O_3$ (see Fig.\ 4).

Let $G$ be  primitive-divergent in $\phi^4$. From Euler's relation for a connected graph:
$N_G - V_G = h_G - 1$, combined with $(\ref{defnprimdiv})$, one easily shows 
that $G$ has  4 vertices of degree 3 or 2 vertices of degree 2.
Its \emph{completion} $\Gamma$  is defined to be the graph
obtained by adding a new vertex to $G$, and connecting it via a single edge to every 3-valent vertex in $G$, or a double
edge to every 2-valent vertex of $G$. One shows that the graph $\Gamma$ is completed primitive. The following lemma implies that every completed
primitive graph arises in this way (note that two non-isomorphic graphs can have the same completion).

\begin{lem} Let $\Gamma$ be a completed primitive graph. Then for any vertex $v\in \Gamma$, the graph $\Gamma \backslash v$ is primitive-divergent in $\phi^4$ theory.
\end{lem}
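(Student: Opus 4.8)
The plan is to verify, one at a time, the four properties that make $G=\Gamma\setminus v$ primitive-divergent in $\phi^4$, reducing each to the edge-connectivity hypothesis of Definition~\ref{def1}. The $\phi^4$ condition is immediate: $G$ arises from the $4$-regular graph $\Gamma$ by deleting a vertex, which can only lower degrees, so every vertex of $G$ has degree $\le 4$. For the edge count, $4$-regularity gives $N_\Gamma = 2V_\Gamma$, and deleting $v$ removes exactly its four incident edges, so $N_G = 2V_\Gamma - 4$ and $V_G = V_\Gamma - 1$. Granting that $G$ is connected, Euler's relation $N_G - V_G = h_G - 1$ then yields $h_G = V_\Gamma - 2$, whence $N_G = 2(V_\Gamma - 2) = 2h_G$, which is $(\ref{defnprimdiv})$.

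The heart of the matter is the subdivergence condition $(\ref{defnprimdiv1})$. Here I would first reduce to \emph{induced} subgraphs. For a connected subgraph on a fixed vertex set $W\subseteq V_G$ one has $N_\gamma - 2h_\gamma = -N_\gamma + 2|W| - 2$, which is smallest when $\gamma = G[W]$ is the full induced subgraph (adding internal edges decreases it); and an arbitrary subgraph is a disjoint union of such connected pieces, over which $N_\gamma - 2h_\gamma$ is additive, with forest pieces contributing a positive amount automatically. Thus it suffices to bound $N_{G[W]} - 2h_{G[W]}$ below for $\emptyset\neq W\subsetneq V_G$. Since $v\notin W$, the edges of $G$ inside $W$ are exactly those of $\Gamma$ inside $W$, so writing $\kappa(W)$ for the number of edges of $\Gamma$ joining $W$ to $V_\Gamma\setminus W$, the handshake identity $4|W| = 2N_{G[W]} + \kappa(W)$ gives the clean formula
\[
N_{G[W]} - 2h_{G[W]} = \tfrac12\,\kappa(W) - 2 .
\]
(For the boundary case $W = V_G$ the induced value is $0$, but a strict subgraph on all of $V_G$ omits an edge and so has $N_\gamma - 2h_\gamma \ge 1$.) Consequently $(\ref{defnprimdiv1})$, namely $N_\gamma - 2h_\gamma \ge 1$, becomes the purely graph-theoretic assertion that $\kappa(W)\ge 6$ for every relevant $W$.

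It remains to prove $\kappa(W)\ge 6$ whenever $G[W]$ contains a loop, so $|W|\ge 2$. Then $V_\Gamma\setminus W = (V_G\setminus W)\cup\{v\}$ also has at least two vertices, so $(W, V_\Gamma\setminus W)$ is a bipartition with both sides of size $\ge 2$; since $\Gamma$ is $4$-regular, every edge cut has even size, so $\kappa(W)\in\{2,4,6,\dots\}$. Definition~\ref{def1} forbids $\kappa(W)=4$ outright. The step I expect to be the main obstacle is ruling out $\kappa(W)=2$, since a nontrivial $2$-edge cut is not literally excluded by the hypothesis; I would settle it by a vertex-shifting argument, moving an endpoint of a cut edge across the partition so as to produce a nontrivial $4$-edge cut (when the two cut edges meet distinct vertices of one side) or a $2$-edge cut on a strictly smaller side (when they share a vertex), iterating until a forbidden $4$-edge cut appears. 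This same no-nontrivial-$2$-cut fact also supplies the connectivity of $G$ used earlier: were $G$ disconnected, a parity count on the four edges at $v$ forces exactly two components, each joined to $v$ by two edges, which is precisely a nontrivial $2$-edge cut. The only genuine care needed is for the finitely many small non-simple graphs (e.g.\ the triangle of double edges), where $|W|=2$ would force a multiple edge; these must be checked directly as base cases, and there every proper subgraph is a forest, so $(\ref{defnprimdiv1})$ holds vacuously.
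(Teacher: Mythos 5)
Your reduction of the statement to the purely combinatorial assertion that every vertex bipartition of $\Gamma$ with both sides of size $\geq 2$ has at least six crossing edges is the right way to organize the proof, and the bookkeeping around it is correct: the reduction to connected induced subgraphs, the additivity over components, the identity $N_{G[W]}-2h_{G[W]}=\tfrac{1}{2}\kappa(W)-2$, and the boundary case $W=V_G$ all check out. Note that the paper itself gives no argument here (it defers entirely to Prop.\ 2.6 of \cite{CENSUS}), so the only question is whether your argument closes.

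It does not quite close, in the two places where you lean on Definition \ref{def1}. First, the definition does not forbid $\kappa(W)=4$ ``outright'': it permits a disconnecting $4$-edge cut whenever some component is a single vertex, so every time you exhibit a $4$-edge cut you must also check that no component is a singleton. (Here this is short: a singleton $u$ would have all four of its edges crossing, hence be the only vertex on its side meeting the cut, and the remaining $\geq 1$ vertices on $u$'s side would then be disconnected from everything, contradicting connectedness of $\Gamma$ --- but the check is needed, including for the cuts produced by your shifting step.) Second, and more substantively, the vertex-shifting recipe for excluding $\kappa(W)=2$ breaks precisely when the side containing the two distinct endpoints $b_1\neq b_2$ consists of only those two vertices: shifting $b_1$ then produces a $4$-edge cut isolating $b_2$, which Definition \ref{def1} allows, and iterating never produces a forbidden cut. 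This case is not vacuous --- the degree count forces a triple edge between $b_1$ and $b_2$ --- and requires a different cut, e.g.\ the two crossing edges together with two of the three parallel edges, which separates $\{b_1,b_2\}$ (still joined by one edge, hence not a singleton component) from the rest. Alternatively, the whole $2$-cut exclusion is cleaner without shifting: adjoin any two further edges $f_1,f_2$ to the $2$-cut to get a disconnecting $4$-edge set; at most two vertices can have their edge set equal to $\{e_1,e_2,f_1,f_2\}$, while there are at least $\binom{N_\Gamma-2}{2}\geq 6$ choices of $\{f_1,f_2\}$, so some choice leaves no singleton component and contradicts Definition \ref{def1} directly.
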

\begin{proof}
See the proof of proposition 2.6 in \cite{CENSUS}.
\end{proof}
An immediate consequence of the lemma is that completed primitive graphs with at least 5 vertices are simple.
The following proposition  states that the period of a primitive divergent graph in $\phi^4$ theory only depends on its completion.

\begin{prop} \label{propindepofv}
Let $\Gamma$ be a completed primitive graph, and let $G= \Gamma \backslash v$. Then the integral
(\ref{2}) corresponding to the graph $G$ converges, and  the value of the integral  is independent of the choice of vertex $v$.
\end{prop}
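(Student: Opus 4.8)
The plan is to treat the two claims separately. Convergence of $(\ref{2})$ for $G=\Gamma\backslash v$ is immediate from the Lemma: it guarantees that $\Gamma\backslash v$ is primitive-divergent in $\phi^4$ theory, so $(\ref{defnprimdiv})$ and $(\ref{defnprimdiv1})$ hold and the power-counting convergence criterion of \cite{BEK},\cite{BROWN} applies. The real content is independence of $v$, and for this I would first rewrite $(\ref{2})$ as the projective integral $P(G)=\int_\sigma\Omega_N/\Psi_G^2$ over the positive simplex $\sigma\subset\PP^{N-1}(\RR)$, where $N=N_G$ and $\Omega_N$ is the standard projective volume form. This is legitimate because $N=2h_G$ forces $\Psi_G^2$ and $\Omega_N$ to have the same degree $N$, so the integrand descends to $\PP^{N-1}$, and it dispenses with the arbitrary choice of the edge set to $1$ (the content of the parenthetical remark after $(\ref{2})$).

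The difficulty is that $\Gamma\backslash v$ and $\Gamma\backslash w$ are supported on different edge-subsets of $\Gamma$, so there is no coordinate-wise change of variables between their simplex integrals; I would instead route both through a single integral attached to the whole completed graph $\Gamma$, in which the choice of $v$ enters only as a gauge. Concretely, I would realise $P(\Gamma\backslash v)$ as a position-space conformal integral: assign a point $x_u\in\RR^4$ to each vertex $u$ of $\Gamma$, send $x_v$ to infinity, and integrate $\prod_{(ij)\in E(\Gamma)}(x_i-x_j)^{-2}$ against $\prod_{u\neq v}d^4x_u$. The passage from the parametric integral of the previous paragraph to this form is the standard Schwinger/Symanzik (Fourier) duality: the momentum-space integrand of $\Gamma\backslash v$ is governed by $\Psi_{\Gamma\backslash v}=\sum_T\prod_{e\notin T}x_e$, whereas the Gaussian integration over vertex positions produces a reduced weighted-Laplacian determinant, which by the matrix-tree theorem equals the spanning-tree polynomial $\Psi_\Gamma^\vee=\sum_T\prod_{e\in T}x_e$ of the full graph $\Gamma$ and is, crucially, independent of which vertex's row and column were deleted.

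Granting this equivalence, vertex-independence becomes the assertion that the position-space integral does not care which vertex sits at infinity. Since $\Gamma$ is $4$-regular and $D=4$, the four propagators meeting a surviving vertex $u$ carry total conformal weight $4$, exactly matching the weight of $d^4x_u$; the integrand is therefore invariant under the diagonal Möbius action on the $x_u$. Applying a Möbius transformation that carries $v$ to $w$ then identifies $P(\Gamma\backslash v)$ with $P(\Gamma\backslash w)$, the conformal Jacobians at the moved points being cancelled precisely by the propagator weights. A purely parametric avatar of the same mechanism --- useful if one wishes to avoid analysis in position space --- is to relate $\Psi_{\Gamma\backslash v}$ and $\Psi_{\Gamma\backslash w}$ through minors (Dodgson polynomials) of the Laplacian of $\Gamma$, again exploiting the vertex-independence of the reduced determinant, and to transport one simplex integral to the other by the Cremona involution $x_e\mapsto x_e^{-1}$, which swaps the two spanning-tree conventions while preserving $\sigma$.

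I expect the main obstacle to be the gauge-fixing of the conformal symmetry. Deleting a single vertex fixes only a point, not the full Möbius redundancy, and the naive integral over all $N_\Gamma$ edge parameters diverges under overall rescaling, since $(\Psi_\Gamma^\vee)^2$ has degree $2(V_\Gamma-1)$ while the measure has degree $N_\Gamma=2V_\Gamma$, a mismatch of $2$. The delicate steps are therefore: to fix the residual dilation and special-conformal freedom correctly so that the gauge-fixed integral reduces to the finite $(N-1)$-dimensional simplex integral above; to check convergence precisely under Definition \ref{def1}, the $4$-edge-cut condition entering exactly through the Lemma to ensure that every $\Gamma\backslash w$ is again free of subdivergences when a new vertex is sent to infinity; and to verify that the vertex-moving Möbius map leaves no boundary contribution. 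Once this bookkeeping is controlled, independence of $P(\Gamma\backslash v)$ from $v$ follows.
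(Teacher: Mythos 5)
The paper gives no argument of its own for this proposition: convergence is delegated to Prop.~2.6 of \cite{CENSUS} and Lemma~5.1, Prop.~5.2 of \cite{BEK}, and independence of $v$ to Theorem~2.7\,(5) of \cite{CENSUS}. Your sketch reconstructs exactly the argument living in those references --- convergence from the subdivergence-free power counting supplied by the preceding Lemma, and vertex-independence from the conformal (M\"obius) invariance of the position-space integrand of a $4$-regular graph in four dimensions, with the deleted vertex playing the role of the point at infinity. The steps you flag as delicate (gauge-fixing the noncompact conformal group, absence of boundary terms under the inversion, the parametric-to-position-space dictionary) are precisely where the work in \cite{CENSUS} is concentrated, so as a plan this is the intended route.

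One supporting claim in your middle paragraph is wrong, although your final paragraph does not actually rely on it. The Schwinger--Gaussian integration over vertex positions produces the reduced Laplacian determinant of $G=\Gamma\backslash v$ itself, i.e.\ the Kirchhoff polynomial $\sum_{T}\prod_{e\in T}x_e$ summed over spanning trees of $G$, not of the full graph $\Gamma$: once $v$ sits at infinity its four incident edges are absent from the integrand. You are conflating the (true, matrix-tree) statement that the reduced Laplacian determinant of a \emph{fixed} graph does not depend on which row and column are struck, with the statement to be proved, which compares the \emph{different} graphs $\Gamma\backslash v$ and $\Gamma\backslash w$, whose spanning-tree sets genuinely differ. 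For the same reason the proposed ``purely parametric avatar'' via Dodgson minors and the Cremona involution does not close up into a proof: no combinatorial identity directly equates the two simplex integrals, and it is the conformal map carrying $v$ to $w$ --- the content of Theorem~2.7\,(5) of \cite{CENSUS} --- that does all the work. With that correction, what remains missing from your write-up is exactly the analytic bookkeeping you defer, which in this instance is the entire proof.
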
 

\begin{proof}
See proposition 2.6 in \cite{CENSUS} and Lemma 5.1 and Prop.\ 5.2 in \cite{BEK}  for the convergence. Independence of $v$ follows from Theorem 2.7 (5) in \cite{CENSUS}.
\end{proof} 

Let $\Gamma$ be a completed primitive graph. By the previous proposition we can define the period of $\Gamma$ to be the real number
\begin{equation}\label{3}
P_\Gamma=P(\Gamma\backslash v)
\end{equation}
for any vertex $v\in\Gamma$.  Abusively, we denote the  loop order $\ell_\Gamma$ of a completed primitive graph $\Gamma$ to be the number of independent cycles in  any of its primitive-divergent graphs $\Gamma\backslash v$: 
\begin{equation}\label{6}
\ell_\Gamma=h_{\Gamma\backslash v}=V_\Gamma-2.
\end{equation}
The period of the 3-cycle of double edges is 1, 
$P_{K_5}=6\zeta(3)$, and $P_{O_3}=20\zeta(5)$. For a list of all completed primitive graphs up to loop order 8 and their known periods see \cite{CENSUS}.

\subsection{The product identity}
\begin{figure}[ht]
\epsfig{file=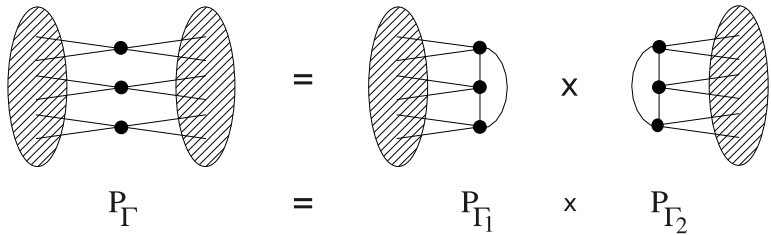,width=\textwidth}
\caption{Vertex-connectivity 3 leads to products of periods.}
\end{figure}

We say that   a  completed primitive graph $\Gamma$  is \emph{reducible}  if there exists a set of three vertices $v_1,v_2,v_3$  in $\Gamma$ such that
$\Gamma\backslash \{v_1,v_2,v_3\}$ has more than one connected component, see Fig.\ 2. A well-known feature of the period is the following factorization property.

\begin{prop}\label{thm1}
Every reducible completed primitive graph $\Gamma$ is isomorphic to  a graph  obtained by  gluing  two completed primitive graphs $\Gamma_1$ and $\Gamma_2$ on triangle faces
followed by the removal of the triangle edges (see Fig.\ 2). The period of $\Gamma$ is the product of the periods of $\Gamma_1$ and $\Gamma_2$,
\begin{equation}\label{7}
P_{\Gamma}=P_{\Gamma_1}P_{\Gamma_2}.
\end{equation}
\end{prop}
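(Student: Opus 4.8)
The statement splits into a combinatorial claim (the decomposition exists) and an analytic claim (the period factorizes), and I would handle them separately. For the combinatorial part, let $S=\{v_1,v_2,v_3\}$ be a separating set realizing reducibility, chosen \emph{minimal}, and write $\Gamma\backslash S=A\sqcup B$ with both sides nonempty (grouping components if there are more than two). Since there are no edges between $A$ and $B$, every edge leaving $A$ lands in $S$; writing $a_i$ (resp. $b_i$) for the number of edges from $v_i$ into $A$ (resp. $B$), minimality of $S$ together with Menger's theorem forces $a_i,b_i\ge 1$. The plan is then to pin these numbers down using the defining property of a completed primitive graph, namely that every $4$-edge cut is trivial.

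The key points are: since $\Gamma$ is $4$-regular, every edge cut has even size, so $\sum_i a_i$ is even and hence $\ge 4$; and a cut of size $4$ would be a forbidden nontrivial $4$-edge cut unless one side were a single vertex, which is impossible here because a degree-$4$ vertex cannot send all four edges to the three vertices of $S$ in a simple graph (and a reducible $\Gamma$ has enough vertices to be simple). This gives $\sum_i a_i\ge 6$, symmetrically $\sum_i b_i\ge 6$, and since $\sum_i(a_i+b_i)\le 12$ with equality exactly when $S$ is independent, I conclude $\sum_i a_i=\sum_i b_i=6$ and that $S$ is an independent set. Finally, if some $a_i\ne 2$, then moving $v_i$ to the appropriate side produces an edge cut of size exactly $4$ with at least two vertices on each side, again forbidden; hence $a_i=b_i=2$ for all $i$. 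I then define $\Gamma_1$ (resp. $\Gamma_2$) to be the induced subgraph on $A\cup S$ (resp. $B\cup S$) with the triangle on $\{v_1,v_2,v_3\}$ added: each $v_i$ acquires $2+2=4$ edges, so both are $4$-regular, and reversing the construction — glue along the common triangle, then delete the triangle edges — recovers $\Gamma$. That $\Gamma_1,\Gamma_2$ are themselves completed primitive I would verify by translating any hypothetical nontrivial $4$-edge cut of $\Gamma_i$ back into one of $\Gamma$.

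For the factorization of the period I would pass to the position-space (conformal) representation, in which each vertex becomes a point in the ambient space, each edge a propagator, and in which independence of the deleted vertex (Prop.~\ref{propindepofv}) is exactly conformal invariance. Using this invariance I fix the three cut vertices $v_1,v_2,v_3$ at standard positions and integrate over the remaining internal vertices. Because $A$ and $B$ share no edges, the integral separates as a product of an integral over the $A$-vertices and one over the $B$-vertices, each depending only on the now-fixed positions of $v_1,v_2,v_3$. Each factor is by construction the conformal integral attached to $\Gamma_1$ (resp. $\Gamma_2$), the triangle propagators of $\Gamma_i$ being supplied precisely by the gauge-fixing of the three points, so the factors equal $P_{\Gamma_1}$ and $P_{\Gamma_2}$ and $(\ref{7})$ follows.

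The combinatorial half is essentially forced by the cut conditions, and I expect the inheritance of the completed-primitive property by $\Gamma_1,\Gamma_2$ to be routine bookkeeping. The genuine difficulty, and the step I would expect to be hardest to make fully rigorous, is the analytic factorization: one must justify the passage to the conformal representation, show that fixing the three vertices and exchanging the order of integration is legitimate (convergence of the iterated integrals, which relies on each $\Gamma_i$ being again primitive-divergent so that each factor converges), and check that the triangle-edge propagators are exactly absorbed by the gauge-fixing, leaving no residual cross-term. An alternative, purely parametric route would derive the same identity from contraction--deletion (Dodgson) relations for $\Psi_G$ with respect to the $3$-vertex split, but the conformal argument makes the role of the triangle transparent and is the one I would pursue first.
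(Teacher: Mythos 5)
The paper does not actually prove this proposition: it simply cites Thm.~2.10 of \cite{CENSUS} and \S3.4 of \cite{BROWN}, so your proposal has to be judged against those arguments. Your combinatorial half is essentially the right argument and is close to complete: the parity of edge cuts in a $4$-regular graph, the exclusion of a $4$-edge cut around $A$ via simplicity, the count $\sum_i(a_i+b_i)=12-2|E(S)|$ forcing $\sum a_i=\sum b_i=6$ with $S$ independent, and the final ``shift $v_i$ across the cut'' step forcing $a_i=b_i=2$ all check out. The one point you should not wave at is the appeal to minimality of $S$: you need to know that a completed primitive graph on $\geq 5$ vertices has no separating set of size $\leq 2$ (otherwise ``minimal'' does not hand you a $3$-set with all $a_i,b_i\geq 1$). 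This is true, and provable by the same even-edge-cut bookkeeping, but it is a genuine prerequisite rather than an automatic consequence of Menger. The verification that $\Gamma_1,\Gamma_2$ are again completed primitive is indeed routine but is part of the content of the cited Thm.~2.10.

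For the analytic half you have correctly identified the two known routes: the position-space/conformal argument is essentially the proof in \cite{CENSUS}, and the parametric (graph-polynomial factorization) route you mention as an alternative is what \cite{BROWN} \S3.4 does. As you concede, your sketch leaves the real work undone. The missing ingredient is the precise uniqueness statement for conformally covariant three-point functions: after integrating out the internal vertices of one side, the resulting function of $x_{v_1},x_{v_2},x_{v_3}$ is conformally covariant with scaling weights dictated by $4$-regularity, hence is a \emph{constant} multiple of the product of the three two-point propagators on the triangle, and that constant is $P_{\Gamma_1}$. Without stating and proving this (together with the convergence of each factor, which uses that $\Gamma_i\backslash v$ is again primitive-divergent, and a Fubini justification), the ``gauge-fixing absorbs the triangle propagators'' step is an assertion rather than a proof. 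So: right strategy, combinatorics essentially done modulo the $3$-connectivity remark, analysis still a sketch where the paper defers entirely to \cite{CENSUS} and \cite{BROWN}.
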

\begin{proof}See
Thm.\ 2.10  in \cite{CENSUS} or \S3.4 in \cite{BROWN}. 
\end{proof}

Note that the above  gluing operation is not defined for all pairs of graphs, since not all completed primitive graphs contain triangles. Let us define
$$\mathcal{G}= \langle \Gamma,  \cup  \rangle$$
to be the free commutative monoid generated by completed primitive graphs $\Gamma$ (with multiplication denoted by $\cup$), and  let 
$$\mathcal{G}_0 = \mathcal{G} / (\Gamma \hbox{ red.} \sim \Gamma_1 \cup \Gamma_2) $$
be the quotient by the equivalence relation generated by   identifying  a reducible completed primitive graph  $\Gamma$ 
with the union of  its components.

\begin{cor} The period gives a well-defined multiplicative map
$$
P: \mathcal{G}_0 \longrightarrow \RR_+.
$$
\end{cor}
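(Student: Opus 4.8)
The plan is to build the map in two stages, exploiting that $\mathcal{G}$ is \emph{free} as a commutative monoid and that $\mathcal{G}_0$ is a congruence quotient of it. First I would observe that by Proposition \ref{propindepofv} the real number $P_\Gamma = P(\Gamma \backslash v)$ is well defined on each generator $\Gamma$ of $\mathcal{G}$, independently of the chosen vertex $v$, and that the convergent integral $(\ref{2})$ takes values in $\RR_+$ (with $P_\Gamma = 1$ for the $3$-cycle of double edges). By the universal property of the free commutative monoid, the assignment $\Gamma \mapsto P_\Gamma$ extends uniquely to a multiplicative monoid homomorphism
\[
\tilde P : \mathcal{G} \longrightarrow (\RR_+, \ti), \qquad \tilde P(\Gamma^{(1)} \cup \cdots \cup \Gamma^{(k)}) = \prod_{i=1}^k P_{\Gamma^{(i)}},
\]
sending the empty word to $1$.

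Second, I would show that $\tilde P$ descends to the quotient $\mathcal{G}_0$. Recall that $\mathcal{G}_0 = \mathcal{G}/{\sim}$, where $\sim$ is the congruence generated by the elementary relations $\Gamma \sim \Gamma_1 \cup \Gamma_2$ for each reducible $\Gamma$ with components $\Gamma_1, \Gamma_2$. Since a monoid homomorphism out of $\mathcal{G}$ factors through this congruence quotient if and only if it identifies the two sides of every \emph{generating} relation — the set of pairs on which $\tilde P$ agrees is automatically a congruence, and if it contains the generators then it contains $\sim$ — it suffices to check $\tilde P(\Gamma) = \tilde P(\Gamma_1 \cup \Gamma_2)$ for each reducible $\Gamma$. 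But the left-hand side equals $P_\Gamma$ and the right-hand side equals $P_{\Gamma_1} P_{\Gamma_2}$, and these coincide by the factorization identity $(\ref{7})$ of Proposition \ref{thm1}. Hence $\tilde P$ factors uniquely through the quotient map $\mathcal{G} \surj \mathcal{G}_0$, yielding the desired multiplicative map $P : \mathcal{G}_0 \to \RR_+$.

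The two propositions do essentially all of the work, so the only point requiring care is the bookkeeping that turns them into a statement about the monoid $\mathcal{G}_0$: one must confirm that $\sim$ is genuinely a congruence (compatible with $\cup$), so that $\mathcal{G}_0$ carries a monoid structure for which the quotient map is a homomorphism and multiplicativity is inherited, and that verifying the generating relations really does suffice to force factorization. I expect this formal reduction — rather than any analytic input — to be the main obstacle; once it is in place, Proposition \ref{thm1} closes the argument immediately.
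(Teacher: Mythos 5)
Your proposal is correct and matches the paper's (implicit) argument: the corollary is stated there without proof precisely because it follows immediately from Proposition \ref{propindepofv} (well-definedness of $P_\Gamma$ on generators) together with the factorization identity (\ref{7}) of Proposition \ref{thm1}, exactly as you spell out. Your explicit bookkeeping with the universal property of the free commutative monoid and the descent through the generating relations of the congruence is the standard way to make this precise and introduces no gap.
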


\begin{remark}  There exist  two other known identities on periods: the twist identity \cite{CENSUS} and the (rather rare) Fourier identity \cite{BK}. It turns out that these
are often subsumed by the double triangle relation (below) at low loop orders and therefore we shall not include them in the present set-up.
\end{remark}
There are no presently-known identities on periods which relate a non-trivial linear combination of graphs.

\subsection{Double triangle reduction}
The double-triangle reduction is defined as follows.
\begin{figure}[ht]
\epsfig{file=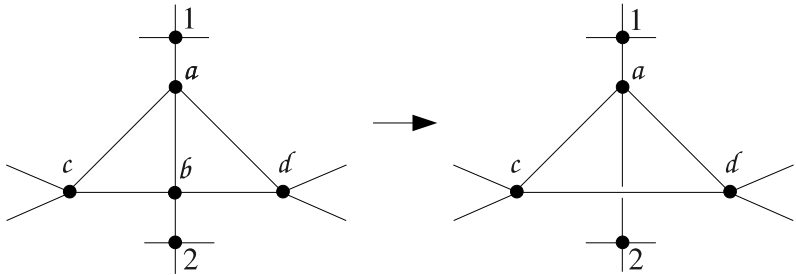,width=\textwidth}
\caption{Double triangle reduction: replace a joint vertex of two attached triangles by a crossing.}
\end{figure}

\begin{defn}\label{def2}
Assume a  graph $\Gamma$ has an edge $ab$ that is the common edge of (exactly) two triangles $(abc)$ and $(abd)$, $c\neq d$.
The  \emph{double triangle reduced graph} is defined to be  the graph in which the vertex $b$ is replaced by a crossing with edge $cd$ as depicted in Fig.\ 3.
\end{defn}
A double triangle reduced graph is completed primitive if and only if the original graph is completed primitive (the non-trivial direction of this statement is Prop.\ 2.19 in \cite{CENSUS}).
Let us define
$$\mathcal{G}_1 = \mathcal{G}_0 / \sim_{dt}$$
where $\sim_{dt}$ is the equivalence relation generated by $\Gamma_1 \sim_{dt} \Gamma_2$ if $\Gamma_1$ and $\Gamma_2$ are linked by a double triangle reduction.

Note that double triangle reduction does  \emph{not} preserve the period, but does respect the $c_2$-invariant, as we shall see below.
We define the \emph{family} of a completed primitive graph $\Gamma$ to be its equivalence class in $\mathcal{G}_1$.

\subsection{The ancestor of a family}
Every family has a unique smallest member in $\mathcal{G}_0$, which we call its ancestor.
\begin{defn}\label{def4}
A disjoint union of completed primitive graphs is an ancestor if none of its components can be reduced by the product  or double triangle.
An ancestor is prime if it is connected.
\end{defn}
\begin{thm}\label{thm2}
Every family has a unique ancestor.
\end{thm}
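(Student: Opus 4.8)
The plan is to treat the product and double-triangle reductions as a rewriting system on disjoint unions of completed primitive graphs, whose normal forms are exactly the ancestors of Definition~\ref{def4}, and to prove that this system is both terminating and confluent; unique normal forms then follow from Newman's Lemma. Existence of an ancestor in each family is the easy half: starting from any representative one applies reductions until none is possible, and termination guarantees that this halts at an ancestor. Uniqueness is the real content and reduces to local confluence.

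First I would establish termination. To each disjoint union $X = \Gamma^{(1)} \cup \cdots \cup \Gamma^{(k)}$ associate the finite multiset $\mu(X) = \{V_{\Gamma^{(1)}}, \ldots, V_{\Gamma^{(k)}}\}$ of vertex numbers of its components (discarding any copy of the unit, the $3$-cycle of double edges), and order multisets by the well-founded Dershowitz--Manna extension of $<$ on $\NN$. A double-triangle reduction replaces a component on $V$ vertices by one on $V-1$ vertices, strictly decreasing $\mu(X)$. A proper product reduction replaces a component $\Gamma$ by $\Gamma_1 \cup \Gamma_2$ with $V_{\Gamma_1} + V_{\Gamma_2} = V_\Gamma + 3$ by Prop.~\ref{thm1}; since the only completed primitive graph on $3$ vertices is the unit and there is none on $4$ vertices, every non-unit factor has at least $5$ vertices, so both $V_{\Gamma_i} \leq V_\Gamma - 2 < V_\Gamma$, again strictly decreasing $\mu(X)$. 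Hence every reduction sequence terminates.

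The heart of the argument is local confluence: whenever $X \to Y_1$ and $X \to Y_2$ by two single reductions, $Y_1$ and $Y_2$ must admit a common reduct. When the two reduction sites are vertex-disjoint, or lie in different components, the moves visibly commute and the common reduct is reached by one further step on each side. The remaining cases are the critical pairs in which the sites overlap: two double triangles sharing the central edge $ab$ or sharing a triangle, a double triangle meeting a splitting $3$-vertex cut, or two splitting $3$-vertex cuts that cross. Here I would enumerate the finitely many overlap patterns and, in each, exhibit explicit further reductions (possibly absorbing a unit factor in $\mathcal{G}_0$) that carry $Y_1$ and $Y_2$ to isomorphic graphs; compatibility of double-triangle reduction with the completed-primitive property (Prop.~2.19 of \cite{CENSUS}) ensures that all intermediate graphs remain in the system.

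I expect the overlapping double-triangle configurations to be the main obstacle: reducing one double triangle rewires the edges around the shared vertex and can modify or destroy the second double triangle, so one must verify by hand that the two orders of reduction, after absorbing any unit factors, yield the same graph up to isomorphism. Once all critical pairs are shown joinable, local confluence holds; Newman's Lemma then upgrades it to global confluence in the presence of termination, and confluence together with termination yields a unique normal form in each equivalence class of $\sim_{dt}$ and the product relation, that is, a unique ancestor for every family.
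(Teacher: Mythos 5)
The paper does not prove this theorem in-house: it delegates entirely to Prop.~2.21 and Prop.~2.22 of \cite{CENSUS}, where the content is precisely a verification that the outcome of the reductions is independent of the order in which they are applied. Your framing of the problem as termination plus local confluence of a rewriting system, followed by Newman's Lemma, is therefore the right skeleton and matches the spirit of the cited proof. Your termination argument is complete and correct: the multiset of component vertex counts strictly decreases under both moves (a double-triangle reduction drops one vertex; a product reduction splits $V_\Gamma$ into $V_{\Gamma_1}+V_{\Gamma_2}=V_\Gamma+3$ with each non-unit factor having at least $5$ vertices, hence each factor strictly smaller).

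The gap is that the entire mathematical content of uniqueness --- the joinability of the critical pairs --- is announced but never carried out. You correctly identify the dangerous configurations (two double triangles sharing the edge $ab$ or a triangle, a double triangle straddling a splitting $3$-vertex cut, two crossing $3$-vertex cuts), and you correctly observe that reducing one double triangle can rewire or destroy an overlapping one; but "I would enumerate the finitely many overlap patterns and exhibit explicit further reductions" is exactly the part that constitutes the proof, and it is nontrivial: for instance, a double-triangle reduction can \emph{create} a new double triangle or a new $3$-vertex cut that was not present before, so the two sides of a critical pair need not rejoin in a single further step each, and one must argue joinability after possibly several steps. Likewise your claim that vertex-disjoint redexes "visibly commute" needs an argument when one of them is a product reduction, since a $3$-vertex cut is a global property of the graph and the decomposition of Prop.~\ref{thm1} involves reinstating triangle edges on each factor. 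As written, the proposal is a correct strategy with the decisive case analysis left as a promise; the actual verification is what \cite{CENSUS}, Props.~2.21--2.22, supplies.
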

\begin{proof}
Prop.\ 2.21 and Prop.\ 2.22 of \cite{CENSUS}.
\end{proof}
The graph $O_3$ is in the family of the prime ancestor $K_5$.

\begin{remark}  By combining the results of \cite{BD} and \cite{BY}, we deduce that the piece of maximal (generic) Hodge-theoretic weight of the  cohomology of the graph hypersurface is invariant under double 
triangle reduction. In particular, a graph has `weight drop' in the sense of \cite{BY} if and only if its double-triangle reductions do also.
\end{remark}

\section{Counting points over finite fields}
Let $q=p^n$, where $p$ is a prime number, and let $\FF_q$ denote the finite field with $q$ elements.
 If $G$ is a graph  with $N_G$ edges, let 
\begin{equation}\label{9}
[X_G]_q=|\{x\in\FF_q^{N_G}:\Psi_G(x)=0\}| 
\end{equation}
denote the number of points of the affine graph hypersurface $X_G(\FF_q)$. 
If $G$ has at least three vertices then
$$0\equiv [X_G]_q \mod q^2$$ 
by    (\cite{BS}, Prop.-Def.\ 18).
This motivates the following definition: 
\begin{defn}\label{def5}
For a graph $G$ with at least three vertices, the $c_2$-invariant is the map which associates to every prime power $q$  the following element of $\ZZ / q \ZZ $:
\begin{equation}\label{10}
c_2(G)_q\equiv [X_G]_q/q^2\mod q.
\end{equation}
\end{defn}
The $c_2$-invariant should be thought of as a combinatorial version of the period of a graph, and carries the salient qualitative information
about the arithmetic nature of the period (see Conj.\ \ref{con2}). 

\subsection{Properties of the $c_2$-invariant}
Because graphs with isomorphic completions are proved to have the same period, Conj.\ \ref{con2} implies that the $c_2$-invariant only depends on the completion class of a graph.

\begin{con}\label{con4} (\cite{BS}, Conjectures\ 4 and 35)
The $c_2$-invariants of primitive-divergent graphs which have isomorphic completions are equal modulo $q$.
\end{con}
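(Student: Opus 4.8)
The plan is to prove the sharper statement that $c_2(\Gamma\backslash v)_q$ depends only on the completed primitive graph $\Gamma$ and not on the deleted vertex $v$. This suffices, because two primitive-divergent graphs have isomorphic completions precisely when they arise as $\Gamma\backslash v$ and $\Gamma\backslash w$ for a common $\Gamma$ (Lemma after Prop.\ \ref{propindepofv}). So fix $\Gamma$ and two vertices $v,w$, and let $E_{vw}$ denote the set of edges incident to neither $v$ nor $w$. Then $\Gamma\backslash v$ and $\Gamma\backslash w$ share exactly the edges $E_{vw}$ and differ only in the four edges at $w$ (present in $\Gamma\backslash v$) versus the four at $v$ (present in $\Gamma\backslash w$). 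I would compute both $c_2$-invariants by Brown's denominator reduction: since $\Psi_G$ is multilinear, one eliminates edge variables one at a time, each step trading a pair of Dodgson polynomials of $G$ for a single polynomial in fewer variables, and after enough steps the $q^2$-coefficient of $[X_G]_q$, hence $c_2(G)_q\bmod q$, is recovered (up to sign) as the $\FF_q$-point count of the resulting polynomial. This is the point-count shadow of the projective-duality argument that proves vertex-independence of the \emph{period} in Prop.\ \ref{propindepofv}.

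The first concrete step is to run the two reductions in parallel, eliminating first over the common edges $E_{vw}$ in the same order for both graphs (for as many of them as the reduction permits). Because the Dodgson polynomials of $\Gamma\backslash v$ and of $\Gamma\backslash w$ obtained by deleting and contracting edges of $E_{vw}$ are minors of the reduced incidence matrix of the single graph $\Gamma\backslash\{v,w\}$, the two partially reduced objects should both be expressible through one common family of Dodgson polynomials of $\Gamma\backslash\{v,w\}$, with the four ``extra'' variables (at $w$, respectively at $v$) entering linearly through the boundary rows recording how the deleted vertex attaches. The second step is to exhibit, from $4$-regularity together with the $4$-edge-cut condition of Definition \ref{def1}, a correspondence between the attachment data of $v$ and of $w$ that matches the two remaining polynomials modulo $q$. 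Completed-primitivity is exactly what guarantees the connectivity needed so that the relevant minors do not degenerate, and one would finish by rewriting one side into the other using the classical Dodgson (Plücker) quadratic identities among these minors.

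The main obstacle is this last matching step. For the period the integrand vanishes to high order along the coordinate hyperplanes, so the boundary strata on which the reduction degenerates contribute nothing and the duality is clean; for the point count \emph{every} stratum contributes, and the two reductions can genuinely disagree on loci lying inside the coordinate hyperplanes. One must therefore show that the discrepancy between the $E_{vw}$-reductions of $\Gamma\backslash v$ and $\Gamma\backslash w$ is supported on a subvariety whose number of $\FF_q$-points is $\equiv 0 \bmod q^3$, so that the $q^2$-coefficient, and hence $c_2$, is untouched. Controlling this discrepancy uniformly is hard precisely because $\Gamma$ need not be planar: when $\Gamma$ is planar one can instead route through the known planar-duality invariance of $c_2$ and conclude, but in the non-planar case no such shortcut exists, and it is here that the statement remains only a conjecture. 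A complementary, more structural route would be to prove directly that the graded piece of the cohomology of $X_{\Gamma\backslash v}$ detected by $c_2$ is a completion invariant in the sense of \cite{BD}; this would also yield Conjecture \ref{con2}, but it appears strictly harder than the combinatorial matching proposed above.
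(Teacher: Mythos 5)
This statement is not a theorem in the paper: it is Conjecture \ref{con4}, imported from \cite{BS} (Conjectures 4 and 35), and the authors state explicitly that ``it is still unproved at present'' and merely \emph{assume} it throughout. So there is no proof in the paper to compare against, and your proposal does not supply one either: it is a strategy outline whose decisive step is left open. Concretely, the gap is the ``matching step'' you yourself flag. After reducing $\Gamma\backslash v$ and $\Gamma\backslash w$ in parallel over the common edges $E_{vw}$, you would need to show that the two resulting polynomials have the same $\FF_q$-point count modulo $q$ (equivalently, that the discrepancy in $[X_{\Gamma\backslash v}]_q-[X_{\Gamma\backslash w}]_q$ is $\equiv 0 \bmod q^3$). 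Nothing in the proposal produces this: the Dodgson/Pl\"ucker identities you invoke control generic factorizations, but, as you note, the point count sees every stratum, including the coordinate-hyperplane loci where the parallel reductions genuinely diverge. Identifying and bounding that discrepancy is precisely the content of the conjecture, not a technical detail one can defer. Moreover, denominator reduction is only guaranteed to run for the first few steps (the paper notes $D^5$ and $D^6$ factorize for $\phi^4$ graphs, but generic graphs are not denominator reducible beyond that), so the premise that both reductions can be carried ``over the common edges'' far enough to compare the residual data at $v$ and $w$ is itself unjustified.

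Two further points. First, the claimed planar shortcut via ``the known planar-duality invariance of $c_2$'' is not established in the paper or its cited results in a form that yields completion invariance, and even granting a duality statement it relates $G$ to its planar dual, not $\Gamma\backslash v$ to $\Gamma\backslash w$. Second, your framing of the period argument is also off: Proposition \ref{propindepofv} (independence of the period of the choice of $v$) is proved in \cite{CENSUS} by manipulations of the convergent integral, and the paper is explicit that this argument does not descend to the point-counting side --- that descent is exactly Conjecture \ref{con2}/\ref{con4}. In short, the statement is open; your write-up is a reasonable plan of attack but should be presented as such, not as a proof.
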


By proposition \ref{propindepofv}, Conj.\ \ref{con4} follows from Conj.\ \ref{con2}.
Conjecture \ref{con4} has been verified for many families, but although it seems much more tractable than Conj.\ \ref{con2} it is still unproved at present.
Nonetheless, we shall assume  Conj.\ \ref{con4} to be true throughout the remainder of this paper. Thus, we shall assume that the $c_2$-invariant lifts to the completed
primitive graph $\Gamma$ and we will use the notation
\begin{equation}\label{11}
c_2[\Gamma]_q\equiv c_2(\Gamma-v)_q\mod q
\end{equation}
to avoid any possible confusion with $c_2(\Gamma)_q$ (which we shall never consider
for completed graphs in this article). 
\begin{remark} Using the techniques of \cite{BSY, BY} and performing double-triangle reductions `at infinity', one can presumably show that the previous conjecture is true for a family if and only if it is true for the prime ancestor
of that family.
\end{remark}

Every completed primitive graph with at least 5 vertices
has a $c_2$-invariant and the first two examples are
\begin{equation}\label{12}
c_2[K_5]_q\equiv c_2[O_3]_q\equiv -1\mod q.
\end{equation}

If $G$ is uncompleted then in general $c_2(G)_q$ vanishes mod $q$ for graphs $G$ with vertex-connectivity $\leq 2$ (Prop.\ 31 in \cite{BS}).
Reducible completed primitive graphs have vertex-connectivity 3.
Hence the removal of one of the 3 connecting vertices provides a primitive graph with vanishing $c_2$-invariant.
\begin{prop}[assuming Conj.\ \ref{con4}]\label{prop1}
The $c_2$-invariants of reducible completed primitive graphs vanish modulo $q$.
\end{prop}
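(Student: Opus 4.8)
My plan is to reduce the $c_2$-invariant of the completed graph $\Gamma$ to that of a conveniently chosen uncompleted graph, and then to invoke the vanishing result for graphs of low vertex-connectivity quoted just before the statement (Prop.\ 31 of \cite{BS}). The point of assuming Conj.\ \ref{con4} is exactly that it makes $c_2[\Gamma]_q$ well-defined independently of the vertex deleted from $\Gamma$ (this is the content of $(\ref{11})$), so that it is enough to compute $c_2(\Gamma-v)_q$ for one especially good choice of $v$.

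So suppose $\Gamma$ is a reducible completed primitive graph. By definition there are three vertices $v_1,v_2,v_3$ such that $\Gamma\backslash\{v_1,v_2,v_3\}$ is disconnected. I would set $G=\Gamma-v_1$, deleting one of these three separating vertices. By the Lemma in Section \ref{GN}, which guarantees that $\Gamma-v$ is primitive-divergent in $\phi^4$ theory for every vertex $v$, the graph $G$ is primitive-divergent; in particular it is connected and has at least three vertices, so its $c_2$-invariant is defined. The first step is the observation that $\{v_2,v_3\}$ is a $2$-vertex cut of $G$: indeed $G\backslash\{v_2,v_3\}=\Gamma\backslash\{v_1,v_2,v_3\}$ is disconnected by hypothesis, while $G$ itself is connected. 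Hence $G$ has vertex-connectivity at most $2$.

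The second step is to feed this into Prop.\ 31 of \cite{BS}, giving $c_2(G)_q\equiv 0\mod q$. Finally, because $G=\Gamma-v_1$ has completion $\Gamma$, the definition $(\ref{11})$ of the lifted invariant (legitimate under Conj.\ \ref{con4}) gives
$$c_2[\Gamma]_q\equiv c_2(\Gamma-v_1)_q=c_2(G)_q\equiv 0\mod q,$$
which is the assertion.

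I do not anticipate a genuine obstacle here: once Prop.\ 31 of \cite{BS} is used as a black box, everything is elementary graph theory. The only steps needing a little care are the connectivity bookkeeping — namely that deleting a single separating vertex leaves a connected graph in which the two remaining separating vertices still form a cut — and the verification that $G$ satisfies the hypotheses of Prop.\ 31 (uncompleted, connected, at least three vertices); both follow immediately from the definition of reducibility and the Lemma of Section \ref{GN}. The conceptually essential ingredient is the appeal to Conj.\ \ref{con4}: without it, $c_2[\Gamma]_q$ might depend on the deleted vertex, and the reduction to the convenient choice $v_1$ would not be justified.
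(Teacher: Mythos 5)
Your argument is correct and is essentially identical to the paper's: the paper also deletes one of the three separating vertices, observes that the resulting primitive graph has vertex-connectivity at most $2$, applies Prop.\ 31 of \cite{BS} to get vanishing, and uses Conj.\ \ref{con4} to transfer this to the completed graph. Your write-up just makes the connectivity bookkeeping more explicit than the paper does.
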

The $c_2$-invariant is invariant under double triangle reductions.
\begin{prop}[assuming Conj.\ \ref{con4}]\label{prop2}
If a completed primitive graph $\Gamma_1$ can be double triangle reduced to $\Gamma_2$ then $c_2[\Gamma_1]_q\equiv c_2[\Gamma_2]_q\mod q$.
\end{prop}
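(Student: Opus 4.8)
The plan is to reduce the statement to a purely local computation with Dodgson (minor) polynomials, using the denominator-reduction description of the $c_2$-invariant developed in \cite{BS}. First, since we are assuming Conjecture \ref{con4}, the quantity $c_2[\Gamma_i]_q$ does not depend on which vertex is deleted, so I would choose for each $\Gamma_i$ a vertex $v$ distinct from the four vertices $a,b,c,d$ of the two triangles $(abc)$, $(abd)$ defining the reduction (Def.\ \ref{def2}). Writing $G_i=\Gamma_i\backslash v$, the five edges $ab,bc,ca,bd,da$ then survive in $G_1$, and the double-triangle move is realised inside $G_1$ as the replacement of the vertex $b$ by a crossing, producing $G_2$; it therefore suffices to prove $c_2(G_1)_q\equiv c_2(G_2)_q\bmod q$. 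This is precisely the point at which the hypothesis ``assuming Conj.\ \ref{con4}'' is used.

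Next I would invoke the key structural result of \cite{BS}: because $\Psi_G$ is multilinear in the edge variables, one may eliminate edges one at a time, and after removing a suitable initial segment of edges the $c_2$-invariant is computed (mod $q$) as the affine point-count of the variety cut out by an associated denominator-reduction polynomial, built from the Dodgson polynomials $\Psi_G^{I,J}$. For our situation I would order the edges of $G_1$ so that $ab,bc,ca,bd,da$ are eliminated first, producing the \emph{five-invariant} ${}^5\Psi_{G_1}$, an expression in the minors $\Psi_{G_1}^{I,J}$ with $I,J\subseteq\{ab,bc,ca,bd,da\}$. The heart of the argument is then the claim that ${}^5\Psi_{G_1}$ agrees, as a polynomial in the remaining variables, with the corresponding denominator-reduction polynomial of $G_2$, up to sign and up to a factor that is invertible modulo $q$ (so that the two affine point-counts coincide modulo $q$).

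To establish that claim I would compute the relevant Dodgson polynomials directly from the incidence structure of the two triangles. The edges $ab,bc,ca,bd,da$ meet only at $a,b,c,d$, and the triangle relations impose linear dependencies among the rows and columns, indexed by these edges, of the matrix defining $\Psi_{G_1}$. Exploiting these dependencies, the minors $\Psi_{G_1}^{I,J}$ occurring in ${}^5\Psi_{G_1}$ collapse to minors of the graph obtained by suppressing the vertex $b$, and the latter are exactly the minors that describe the crossing edge $cd$ in $G_2$. Carrying out this local linear algebra should exhibit the two reduction polynomials as equal up to the harmless factors above, whence the denominator-reduction formula yields equal point-counts modulo $q$ and hence $c_2(G_1)_q\equiv c_2(G_2)_q\bmod q$.

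The main obstacle I anticipate is twofold. First, the denominator-reduction formula for $c_2$ is valid only when the initial eliminations are \emph{generic}, i.e.\ the leading coefficients arising in the successive linear eliminations do not vanish identically; I would need to verify that the ordering adapted to the two triangles satisfies this, or else treat the degenerate strata separately. Second, and more seriously, the local identity of reduction polynomials is the genuinely computational core: one must perform the minor manipulations around $b$ with care, track all signs, and control the extraneous unit factor so that it provably drops out of the point-count modulo $q$. This is the step at which the specific combinatorics of the double triangle, rather than an arbitrary local move, is essential.
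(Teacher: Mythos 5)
Your opening step is exactly the paper's: Conj.~\ref{con4} is invoked precisely so that one may delete a vertex $v$ away from the double-triangle configuration and reduce the claim to the statement that the $c_2$-invariant of an \emph{uncompleted} primitive-divergent graph is unchanged by double-triangle reduction. Two details there deserve care. First, the configuration involves five vertices, not four: $b$ is $4$-valent in $\Gamma_1$, and its fourth neighbour $e$ participates in the crossing (the new edges are $cd$ \emph{and} $ae$), so $v$ should avoid $a,b,c,d,e$. Second, such a $v$ need not exist for small graphs; the paper first disposes of the graphs with at most $6$ vertices ($K_5$ and $O_3$) directly via (\ref{12}) and only then assumes $\Gamma_1$ has at least $7$ vertices. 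From that point on the paper simply cites Cor.~34 of \cite{BS} for the uncompleted statement. Your route is therefore not a genuinely different one: it is the paper's route with that citation replaced by a sketch of how the cited result is itself proved (and the five-invariant/denominator-reduction method is indeed how \cite{BS} proves it).

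The gap is that the sketch stops at the substantive step, as you yourself note. The assertion that ${}^5\Psi_{G_1}$ taken on the five double-triangle edges ``agrees with the corresponding denominator-reduction polynomial of $G_2$ up to a unit'' cannot be taken literally: after eliminating $ab,ac,bc,ad,bd$ from $G_1$ the surviving variables include $x_{be}$ but not $x_{ac},x_{ad}$, whereas $G_2$ contains the edges $ac,ad,cd,ae$ and not $be$, so the two polynomials do not even live in the same set of variables. One must therefore also perform eliminations on the $G_2$ side and control the \emph{number} of eliminated edges on each side, because of the factor $(-1)^n$ in Thm.~\ref{thm4}: matching a $D^{n}_{G_1}$ with a $D^{n-1}_{G_2}$ up to a unit would give $c_2(G_1)_q\equiv -c_2(G_2)_q$, the wrong sign, and since $D^k$ is only defined for $k\geq 5$ the correspondence cannot be the naive one between $D^5_{G_1}$ and a ``$D^4_{G_2}$''. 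Verifying which Dodgson minors factorize at each intermediate step, tracking the signs, and getting this parity right is exactly the content of Cor.~34 of \cite{BS}; without it your argument proves only the reduction to the uncompleted case. Finally, your worry about ``generic'' eliminations is misplaced: Thm.~\ref{thm4} needs no genericity, only that each intermediate $D^k$ factors into linear pieces in the next variable, which for orderings adapted to the triangles is a structural fact to be checked, not a genericity hypothesis.
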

\begin{proof}
Because of (\ref{12}) we can assume that $\Gamma_1$ has at least 7 vertices. Hence $\Gamma_1$ has a vertex that is not involved in the double triangle reduction. We remove this vertex to obtain
a primitive-divergent graph $G$ which can be double triangle reduced. By Cor.\ 34 in \cite{BS} the $c_2$-invariant is unaffected by the double triangle reduction. Upon completion of the
reduced graph we obtain $\Gamma_2$.
\end{proof}
These  two propositions prove that the $c_2$-invariant factors through $\mathcal{G}_1$.
\begin{thm}[assuming Conj.\ \ref{con4}]\label{thm3}
All completed primitive graphs in a family have the same $c_2$-invariant. The $c_2$-invariant vanishes modulo $q$ if the ancestor of the family is not prime.
\end{thm}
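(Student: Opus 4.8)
The plan is to show that the $c_2$-invariant is constant across the two generating equivalences of $\mathcal{G}_1$, and then to read off both assertions from the unique-ancestor theorem (Thm.\ \ref{thm2}). Recall that $\mathcal{G}_1$ is the quotient of the monoid of completed primitive graphs by the product relation $\Gamma \sim \Gamma_1 \cup \Gamma_2$ (for reducible $\Gamma$) together with the double-triangle relation $\sim_{dt}$. So it suffices to check that $c_2$ is unchanged across each of these relations, where on a disjoint union the $c_2$-invariant is understood via the corresponding disconnected uncompleted graph. Once this is established, $c_2$ descends to a well-defined invariant on $\mathcal{G}_1$, which is precisely the statement that it ``factors through $\mathcal{G}_1$'' asserted just above the theorem.

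For the double-triangle relation this invariance is exactly Prop.\ \ref{prop2}. For the product relation $\Gamma \sim \Gamma_1 \cup \Gamma_2$ I would argue that both sides have vanishing $c_2$: the left-hand side $\Gamma$ is reducible, so $c_2[\Gamma]_q \equiv 0 \bmod q$ by Prop.\ \ref{prop1}; the right-hand side corresponds to a disconnected graph, which has vertex-connectivity $0 \le 2$, and so also has vanishing $c_2$ by the low-connectivity vanishing result of \cite{BS} already used in the proof of Prop.\ \ref{prop1}. Thus the product relation identifies two graphs both of which have $c_2 \equiv 0$, and the invariance holds on the nose.

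Both assertions now follow formally. Since every completed primitive graph in a family maps to one and the same class in $\mathcal{G}_1$, and $c_2$ depends only on that class, all members of a family share a common $c_2$-invariant; this is the first assertion. For the second, if the ancestor of the family is not prime then, by Def.\ \ref{def4}, it is a disjoint union of at least two completed primitive graphs, i.e.\ a disconnected graph of vertex-connectivity $0$, whose $c_2$ therefore vanishes modulo $q$; by the constancy just proved, every graph in the family ---in particular every connected member--- has $c_2 \equiv 0 \bmod q$.

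The main obstacle is setting up the product relation correctly rather than any deep computation: I must make sense of $c_2$ on a disjoint union and confirm it vanishes, which is where the low-connectivity vanishing of \cite{BS} enters on the disconnected side while Prop.\ \ref{prop1} covers the connected reducible side. Everything else is formal, resting on the uniqueness of the ancestor (Thm.\ \ref{thm2}) to guarantee that the family structure, and hence the common value of $c_2$, is well-defined. One should also confirm that the passage from a completed graph to an uncompleted one (as permitted under Conj.\ \ref{con4}) is compatible with taking disjoint unions, so that the extended $c_2$ on $\mathcal{G}_1$ is unambiguous; this is the one point where I would be careful to spell out the bookkeeping rather than treat it as obvious.
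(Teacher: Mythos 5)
Your proposal is correct and follows essentially the same route as the paper, which gives no separate proof of Thm.\ \ref{thm3} beyond the remark that Prop.\ \ref{prop1} and Prop.\ \ref{prop2} show $c_2$ factors through $\mathcal{G}_1$. Your extra care in defining $c_2$ on disjoint unions (where it vanishes by the vertex-connectivity $\leq 2$ result) and checking well-definedness across both generating relations is exactly the bookkeeping the paper leaves implicit.
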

Thus, assuming Conj.\ \ref{con4}, the number of $c_2$-invariants is at most the number of prime ancestors at a given loop order.
\vskip5mm

\begin{center}
\begin{tabular}{ccc}
loop order&comp.\ prim.\ graphs&prime ancestors\\\hline
3&1&1\\
4&1&0\\
5&2&0\\
6&5&1\\
7&14&4\\
8&49&10\\
9&227&37\\
10&1~354&231
\end{tabular} 
\end{center}
\vskip5mm

\noindent Table 2: The number of completed primitive graphs and prime ancestors up to 10 loops.
\vskip5mm
Table 2 shows that there are  a total of 284 prime ancestors up to loop order 10. These give  a complete list of $c_2$-invariants from $\phi^4$ theory up to loop order 10 (assuming 
conjecture  \ref{con4}). 

\subsection{Constant $c_2$-invariants} There are various families of (uncompleted) graphs for which the $c_2$-invariant is known.
In \cite{BS} it was proven that any graph of vertex-width at most 3 has a polynomial point-count. In particular, 

\begin{thm} \cite{BS} Let $G$ be a graph of vertex-width $\leq 3$. Then the $c_2$-invariant is constant:
$c_2(G)_q \equiv c \mod q$ for some constant $c\in \ZZ$.
\end{thm}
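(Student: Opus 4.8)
The plan is to reduce the theorem to the single claim that a graph of vertex-width $\le 3$ has a \emph{polynomial} point-count, and then to invoke the elementary observation recorded after Prop.\ \ref{prop0}. Concretely, suppose $[X_G]_q=P(q)$ for a fixed $P\in\ZZ[q]$, valid at every prime power $q$. Since $[X_G]_q\equiv 0\bmod q^2$ for all $q$, comparing magnitudes at large primes shows that $P$ has vanishing constant and linear terms, so $P(q)=a_2q^2+a_3q^3+\cdots$ with $a_i\in\ZZ$. Then
\begin{equation}
c_2(G)_q\equiv P(q)/q^2\equiv a_2\bmod q
\end{equation}
for every $q$, and $c=a_2$ is the required constant. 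The whole content is thus polynomiality of the point-count.

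To establish polynomiality I would run the linear (Gaussian) reduction of the counting function, using that $\Psi_G$ has degree $\le1$ in each edge variable. For any edge $e$ the deletion--contraction identity reads $\Psi_G=\Psi_{G/e}+x_e\,\Psi_{G\setminus e}$, and summing the indicator of $\{\Psi_G=0\}$ over $x_e\in\FF_q$ yields
\begin{equation}
[X_G]_q=q^{N_G-1}-[\Psi_{G\setminus e}=0]_q+q\,[\Psi_{G\setminus e}=0,\ \Psi_{G/e}=0]_q.
\end{equation}
Iterating edge by edge produces Brown's denominator reduction \cite{BROWN}: at each stage the loci being counted are cut out by Dodgson polynomials $\Psi_G^{I,J}$ and their higher analogues, which may be read off as minors of the matrix representing $\Psi_G$. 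The resulting expression for $[X_G]_q$ is an alternating sum of powers of $q$ times point-counts of these successive Dodgson loci.

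The hypothesis enters to keep those loci small. I would fix a vertex-ordering realising the vertex-width and process the edges compatibly, so that at each stage the \emph{interface} between summed-out and surviving variables consists of at most $3$ vertices. The key structural claim is that every Dodgson or denominator-reduction polynomial then appears as a determinant whose nontrivial block is indexed by the $\le 3$ interface vertices; hence it depends on boundedly many surviving variables and has bounded degree. For a general graph the reduction eventually reaches the ``$5$-invariant'' and a hypersurface of higher degree --- a quadric, or the singular K3 responsible for the modular examples --- whose point-count need not be polynomial. The width-$3$ hypothesis is precisely what prevents the interface, and therefore the surviving polynomial, from ever growing large enough to escape polynomiality: bounded-size determinantal loci have polynomial point-count, counted by stratifying according to the rank of the small interface matrix. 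Feeding these counts back into the alternating sum and inducting on the number of processed edges gives $[X_G]_q\in\ZZ[q]$, independent of $q$.

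The main obstacle is the structural claim above: one must show that width-$3$ separators genuinely bound the complexity of \emph{every} minor arising in the reduction, i.e.\ that no deletion or contraction step lets the variable-dependence escape the current interface. This demands a careful analysis of how the Dodgson polynomials $\Psi_G^{I,J}$ transform under processing the chosen edges, verifying that the relevant submatrices never exceed the size dictated by the vertex-width. Once this bookkeeping is secured, polynomiality --- and with it the constancy of $c_2$ --- follows formally.
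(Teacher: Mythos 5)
Your first paragraph is exactly the paper's own argument: the theorem is stated as an ``in particular'' of the result of \cite{BS} that vertex-width $\leq 3$ implies $[X_G]_q\in\ZZ[q]$, and the passage from polynomiality to constancy of $c_2$ is precisely the observation recorded after Prop.~\ref{prop0} (the constant $c$ is the coefficient of $q^2$, after noting $a_0=a_1=0$ from the congruence $[X_G]_q\equiv 0\bmod q^2$). That reduction is correct and complete, and it is all the paper itself supplies; the polynomiality statement is imported wholesale from \cite{BS}.

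Your second part, the attempted proof of polynomiality, goes beyond what the paper contains, and it is there that the gap sits. The deletion--contraction recursion for the point-count and the appearance of Dodgson polynomials are the right starting point (this is the machinery of \cite{BROWN} and \cite{BS}), but the entire difficulty of the theorem is the ``structural claim'' you defer: that processing edges compatibly with a width-$3$ vertex ordering confines every polynomial arising in the reduction to a bounded local configuration at the $\leq 3$ interface vertices, and that the resulting loci are polynomially countable. You cannot discharge the second half by the slogan that ``bounded-size determinantal loci have polynomial point-count'': a polynomial in boundedly many variables can already define a curve of positive genus, so boundedness of the interface alone proves nothing. What \cite{BS} actually exploits is that the local polynomials are of degree at most one in each variable and satisfy explicit factorization and compatibility identities among small minors, which is where essentially all of the work lies. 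As written, your proposal establishes the (easy) implication the paper itself proves, and replaces the cited hard input with an outline whose decisive step is acknowledged but not carried out.
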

In \cite{BSY} it was shown that any graph in $\phi^4$ theory which is not primitive, i.e.,  containing a non-trivial subdivergence, has vanishing
$c_2$-invariant.

\begin{thm} Let $G$ be a log-divergent graph in $\phi^4$ theory. Then if $G$ has a strict subgraph $\gamma$ such that $2 h_{\gamma} \geq N_{\gamma} $,
it satisfies
$c_2(G)_q \equiv 0 \mod q$.
\end{thm}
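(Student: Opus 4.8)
The plan is to prove the equivalent statement $[X_G]_q \equiv 0 \pmod{q^3}$, and the strategy is to combine the deletion--contraction recursion for point counts with the way the divergent subgraph $\gamma$ sits inside $\Psi_G$. We may assume $\gamma$ is connected, since $\omega_\gamma := N_\gamma - 2h_\gamma$ is additive over connected components and $\omega_\gamma\le 0$ forces some component to be divergent as well. Writing the edge variables of $\gamma$ as $y$ and the remaining ones as $z$, recall the standard subgraph factorization
\[
\Psi_G(y,z) = \Psi_\gamma(y)\,\Psi_{G/\gamma}(z) + R(y,z),
\]
where $\Psi_\gamma$ is homogeneous of degree $h_\gamma$ in $y$ and every monomial of $R$ has $y$-degree strictly greater than $h_\gamma$; this holds because any spanning tree of $G$ meets $\gamma$ in a forest, hence omits at least $h_\gamma$ edges of $\gamma$. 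First I would record the structural consequence of working in $\phi^4$: a handshake count on the vertex set $W$ of $\gamma$, using $\deg_G(v)\le 4$ together with $|W| = N_\gamma - h_\gamma + 1$, shows that $\gamma$ is joined to the rest of $G$ through at most $4(N_\gamma - h_\gamma + 1) - 2N_\gamma = 2(N_\gamma - 2h_\gamma) + 4 \le 4$ external half-edges. Thus a subdivergence in $\phi^4$ is always attached to its complement by a cut of size at most $4$; this is precisely where the $\phi^4$ hypothesis enters, and it is what makes `physical' graphs special.

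When this cut has size at most $2$, taking one endpoint of each cut edge exhibits $G$ as having a $2$-vertex cut separating the cycles of $\gamma$ from those of $G/\gamma$, so that $c_2(G)_q\equiv 0$ already by Proposition 31 of \cite{BS}. The substantive case is a genuine $4$-edge cut (a $4$-point subdivergence, $\omega_\gamma = 0$), which no longer reduces to small vertex-connectivity, and here I would argue directly with the point count. Fibering over $z$, set $F(z) = \#\{y : \Psi_G(y,z) = 0\}$, so that $[X_G]_q = \sum_z F(z)$. For fixed $z$ the polynomial $\Psi_G(\cdot,z)$ is multilinear in the $N_\gamma$ variables $y$ and, by the factorization, all of its monomials have $y$-degree $\ge h_\gamma$; iterating the linear relation $[X_f]_q = q^{n-1} - [X_g]_q + q\,[X_{g,h}]_q$, valid whenever $f = x\,g + h$ is linear in $x$, across the $y$-edges then expresses $F(z)$ through counts of the Dodgson minors of $\gamma$. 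The generic divisibility $[X_G]_q \equiv 0 \pmod{q^2}$ of Proposition \ref{prop0} is recovered this way; the task is to extract one further factor of $q$.

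The extra factor should come precisely from the inequality $N_\gamma \le 2h_\gamma$: it forces $\gamma$ to carry at least as many independent cycles as half of its edges, so that in the iterated reduction over the $y$-edges there are strictly more forced common-zero steps (each contributing a factor of $q$ via the $q\,[X_{g,h}]_q$ term) than occur for a primitive graph, while the complementary quotient satisfies $\omega_{G/\gamma} = -\omega_\gamma \ge 0$. Concretely, I would isolate the locus $\{\Psi_\gamma(y)=0\}$, on which the leading term of $\Psi_G$ degenerates, and show that after summing over $z$ this accounts for the third power of $q$. The main obstacle is exactly this last step: one must control the higher-order remainder $R(y,z)$ and the degenerate loci where $\Psi_\gamma$ or $\Psi_{G/\gamma}$ vanishes, and prove that the excess cycles of $\gamma$ yield a clean additional factor of $q$ rather than being reabsorbed into lower-order error terms. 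I expect this $q$-adic bookkeeping --- and not the initial structural reductions --- to be where the real difficulty lies, and that it is most cleanly handled through the Dodgson-polynomial identities relating the minors of $\gamma$ and of $G/\gamma$.
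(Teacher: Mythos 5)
The paper itself gives no proof of this theorem: it is quoted from \cite{BSY}, where the argument runs through an explicit low-dimensional representative of the $c_2$-invariant --- a product of Dodgson polynomials of the form $\Psi^{13,23}_G\,\Psi^{1,2}_{G,3}$ attached to three well-chosen edges, obtained from denominator reduction as in Thm.~\ref{thm4} --- together with degree bounds for such polynomials relative to the edge set of $\gamma$ and an Ax--Chevalley--Warning type divisibility (a polynomial whose degree in $n$ of its variables is less than $n$ has point count divisible by $q$). Your structural preliminaries are correct and genuinely overlap with that route: the reduction to connected $\gamma$, the handshake bound of $2\omega_\gamma+4\le 4$ external half-edges (which is indeed exactly where the $\phi^4$ hypothesis enters), the factorization $\Psi_G=\Psi_\gamma\Psi_{G/\gamma}+R$ with $\deg_y R>h_\gamma$, and the disposal of the small-cut case via vertex-connectivity $\le 2$. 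A minor but real omission: your dichotomy ``cut of size at most $2$'' versus ``genuine $4$-edge cut'' skips the case of exactly $3$ external edges, which can occur when $\omega_\gamma=0$ and a vertex of $\gamma$ has degree $3$ in $G$.

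The serious problem is that the decisive step is a statement of intent rather than an argument. The assertion that $N_\gamma\le 2h_\gamma$ ``forces strictly more common-zero steps, each contributing a factor of $q$'' in the iterated linear reduction is essentially a restatement of the theorem, and nothing in the proposal makes it true: the recursion $[X_f]_q=q^{n-1}-[X_g]_q+q\,[X_{g,h}]_q$ branches into exponentially many terms whose leading contributions cancel in complicated ways, and you offer no mechanism for preventing the remainder $R(y,z)$ and the degenerate loci $\{\Psi_\gamma=0\}$, $\{\Psi_{G/\gamma}=0\}$ from reabsorbing the putative third factor of $q$ --- as you yourself acknowledge. What is missing is precisely the device that makes the published proof work: a single closed-form polynomial representing $c_2(G)_q$ modulo $q$, whose degree in the $\gamma$-variables can be estimated once and for all from the factorization of $\Psi_G$, so that the divisibility follows from a uniform degree-versus-variable count instead of from bookkeeping of a branching recursion. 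As written, the proposal is a plausible outline whose essential step is unproven.
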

Due to renormalization, and more specifically, the addition of counter-terms,  the connection between the $c_2$-invariant and the period is more complicated in the subdivergent case.
Even in `symmetric' cases where one can associate canonical periods to graphs with subdivergences it is not proved that their periods have weight drop.
However, examples (see e.g.\ Eq.\ (5.12) in \cite{Erik}) and some indirect arguments suggest that suitably defined periods of subdivergent graphs should have a double weight drop.

There are several other combinatorial criteria for a graph to have vanishing $c_2$-invariant which are discussed in \cite{BY}.

\subsection{Modularity} We are mainly interested in graphs whose 
 $c_2$-invariant   is congruent modulo $p$ to  the Fourier coefficients of a modular form.
\begin{defn}\label{def6}
A completed primitive graph $\Gamma$ is modular if there exists 
 a normalized Hecke eigenform $f$ for a congruence subgroup of $S\hspace{-1pt}L_2(\ZZ)$, possibly with a non-trivial Dirichlet-character, with an integral Fourier expansion
\begin{equation}\label{15}
f(\tau)=\sum_{k=0}^\infty a_kq^k,\quad q=\exp(2\pi{\rm i}\tau),\quad a_k\in\ZZ,
\end{equation}
such that the $c_2$-invariant modulo $p$ satisfies
\begin{equation}\label{16}
c_2[\Gamma]_p\equiv -a_p\mod p
\end{equation}
for all primes $p$. \end{defn}

For simplicity, we only consider point counts over fields with prime numbers of elements $p$, rather than the more general case of prime powers $q$, because
the latter can involve quadratic residue symbols which vanish modulo $p$ (this is the case for the graph $P_{8,37}$, whose modularity was proved in \cite{BS}).
Note that it is expected, but not known in general,  whether  a modular form of the type considered in Def.\ \ref{def6} is uniquely defined by its coefficients $a_p$ modulo $p$ for all primes $p$.

\section{Denominator reduction}\label{DR}
  Although the $c_2$-invariant appears to be  the most complicated part of the graph hypersurface point-count, the method of denominator reduction 
provides a tool to access precisely this part of the point-count \cite{BS}.

\subsection{Matrix representation}
We recall some basic results from \cite{BROWN}. We will use the following matrix representation for the graph polynomial. 

Choose an orientation of the edges of $G$, and for every edge $e$ and vertex $v$ of $G$, define the incidence matrix:
\begin{equation}
(\mathcal{E}_G)_{e,v} = \left\{
\begin{array}{rl}
1, & \hbox{if the edge } e \hbox{ begins at } v \hbox{ and does not end at } v,\\
-1, & \hbox{if the edge } e  \hbox{ ends at } v \hbox{ and does not begin at } v,\\
0, & \hbox{otherwise}.
\end{array}
\right.
\end{equation}
Let $A$ be the diagonal matrix with entries $x_e$, for $e \in E(G)$, the set of edges of $G$, and set
\begin{equation}
\widetilde{M}_G=\left(
\begin{array}{c|c}
A  & \mathcal{E}_G  \\\hline
{-}\mathcal{E}_G^T&  0  \\
\end{array}
\right)
\end{equation}
where the first $e_G$ rows and columns are indexed by $E(G)$,
and the remaining $v_G$ rows and columns are indexed by the set of vertices of $G$, in some order.
The matrix $\widetilde{M}_G$ has corank $\geq 1$. Choose any vertex of $G$ and let $M_G$ denote the square $(N_G+V_G-1)\times (N_G+V_G-1)$ matrix obtained from it by deleting the row and column indexed by this vertex.

It follows from the matrix-tree theorem that the graph polynomial satisfies 
\begin{equation}\label{17}
\Psi_G=\det (M_G).
\end{equation}

\subsection{The five-invariant}
Let $I,J,K$ be subsets of the set of edges of $G$ which satisfy $|I|=|J|$. Let $M_G(I,J)_K$ denote the matrix obtained from $M_G$ by removing the rows (resp. columns)
indexed by the set $I$ (resp.\ $J$) and setting $x_e=0$ for all $e\in K$. Let 
\begin{equation} \label{Dogsondefn} \Psi_{G,K}^{I,J}=\det M_G(I,J)_K\ . \end{equation}
Now let $i,j,k,l,m$ denote any five distinct edges in a graph $G$. The five-invariant of these edges, denoted ${}^5 \Psi_G(i,j,k,l,m)$ is defined up to a sign, and is given by the determinant
\begin{equation}\label{18}
{}^5\Psi_G(i,j,k,l,m) = \pm \det \left(
\begin{array}{cc}
 \Psi^{ij,kl}_{G,m} &   \Psi^{ik,jl}_{G,m}   \\
 \Psi^{ijm,klm}_{G} &   \Psi^{ikm,jlm}_{G}
\end{array}
\right).
\end{equation}
It  is well-defined, i.e., permuting the five indices $i,j,k,l,m$ only modifies the right-hand side by a sign.
In general, the 5-invariant is irreducible of degree 2 in each variable $x_e$. However, in many cases it factorizes into a product of polynomials each of which is
linear in a variable $x_6$. In this case denominator reduction allows us to further eliminate variables by taking resultants.

\subsection{Reduction algorithm}
Given a graph $G$ and an ordering $e_1,$ $\ldots,$ $e_{N_G}$ on its edges, we can extract a sequence of higher invariants (each defined up to a sign) as follows.
Define $D^5_G(e_1,$ $\ldots,$ $e_5) = {}^5 \Psi_G(e_1,$ $\ldots,$ $e_5)$. Let  $n\geq 5$ and suppose that we have defined $D^n_G(e_1,$ $\ldots,$ $e_n)$. Suppose furthermore that
$D^n_G(e_1,$ $\ldots,$ $e_n)$ factorizes into a product of linear factors in $x_{n+1}$, i.e., it is of the form 
$(ax_{n+1}+b) (cx_{n+1}+d)$. Then we define
\begin{equation}\label{19}
{D}^{n+1}_G(e_1,\ldots, e_{n+1}) =\pm( ad-bc),
\end{equation}
to be the resultant of the two factors of ${D}^n_G(e_1,\ldots, e_n)$ with respect to $x_{n+1}$.
A graph $G$ for which the polynomials ${D}^n_G(e_1,\ldots, e_{n})$ can be defined for all $n$ is called \emph{denominator-reducible}.

One can prove, as for the 5-invariant, that ${D}^n_G(e_1,\ldots, e_n)$ does not depend on the order of reduction of the variables,
although it may happen that the intermediate terms  ${D}^k_G(e_{i_1},\ldots, e_{i_k})$ may factorize for some choices of orderings and not others.

Denominator reduction is connected to the $c_2$-invariant by the following theorem (Thm.\ 29 in \cite{BS} or Eq.\ (2.33) in \cite{SchnetzFq}):
\begin{thm} \label{thm4}
Let $G$ be a connected graph with $N_G\geq5$ edges and $h_G\leq N_G/2$ independent cycles. Suppose that $D^n_G(e_1,\ldots, e_n)$ is the result of
the denominator reduction after $n<N_G$ steps. Then
\begin{equation}\label{20}
c_2(G)_q \equiv  (-1)^n [D^n_G(e_1,\ldots, e_n)]_q \mod q.
\end{equation}
\end{thm}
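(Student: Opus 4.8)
The plan is to compute the affine point count $[X_G]_q$ of (\ref{9}) directly by eliminating edge variables one at a time, exploiting the fact that $\Psi_G$ and all the Dodgson polynomials $\Psi^{I,J}_{G,K}$ of (\ref{Dogsondefn}) have degree $\leq 1$ in each edge variable. The single principle governing every stage is a Chevalley--Warning--Ax estimate: if $P$ is a polynomial in $m$ variables over $\FF_q$ with $\deg P<m$, then (by Ax's theorem) $q\mid \#\{P=0\}$, and hence $\#\{P\neq 0\}\equiv 0\bmod q$. This is exactly the device that produces the two powers of $q$ dividing $[X_G]_q$, and it is the point at which the hypothesis $h_G\leq N_G/2$ (equivalently $\deg\Psi_G\leq N_G/2$) is used repeatedly.

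First I would establish the base case $n=5$. Writing $\Psi_G$ as a multilinear polynomial in $x_1,\dots,x_5$ and summing the indicator of $\{\Psi_G=0\}$ over these five variables in turn, each summation either pins a variable down (contributing $1$) or frees it (contributing a factor $q$ together with the constraint that the corresponding Dodgson minor vanishes). Collecting the terms carrying the factor $q^2$ and discarding the remainder via the Chevalley--Warning--Ax estimate shows both that $[X_G]_q\equiv 0\bmod q^2$ and that $c_2(G)_q$ is $\pm$ the point count of a single polynomial in $x_6,\dots,x_{N_G}$. The Dodgson (Jacobi/Sylvester) determinant identities among the minors $\Psi^{ij,kl}_{G,m}$, $\Psi^{ijm,klm}_G$ then assemble this polynomial into the $2\times 2$ determinant (\ref{18}), i.e.\ $D^5_G={}^5\Psi_G(e_1,\dots,e_5)$, with sign $(-1)^5$. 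This gives (\ref{20}) for $n=5$.

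Next I would run the induction on $n$ through the resultant step (\ref{19}). Suppose (\ref{20}) holds for $n$ and $D^n_G=(ax_{n+1}+b)(cx_{n+1}+d)$ factors into forms linear in $x_{n+1}$, so that $D^{n+1}_G=\pm(ad-bc)$. Counting the zeros of $D^n_G$ over $x_{n+1}$ for fixed $y=(x_{n+2},\dots,x_{N_G})$ and partitioning according to whether the leading coefficients $a,c$ vanish, a short case analysis gives the exact congruence
\[
[D^n_G]_q\;\equiv\;-\,[D^{n+1}_G]_q\;+\;\#\{y:a(y)\neq 0\ \text{and}\ c(y)\neq 0\}\pmod q .
\]
With the sign in (\ref{19}) chosen consistently, the inductive hypothesis would then yield $c_2(G)_q\equiv(-1)^{n+1}[D^{n+1}_G]_q\bmod q$ as soon as the correction term on the right vanishes.

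The crux, and the main obstacle, is precisely to kill this correction $\#\{ac\neq 0\}$, where $ac$ is the coefficient of $x_{n+1}^2$ in $D^n_G$. By the Chevalley--Warning--Ax estimate this count is $\equiv 0\bmod q$ provided $\deg(ac)$ is strictly smaller than the number $N_G-n-1$ of remaining variables $y$, so the real work is to carry controlled degree bounds along the entire reduction. Each $D^n_G$ is homogeneous of degree $2h_G-n$: the five-invariant (\ref{18}) has degree $(h_G-2)+(h_G-3)=2h_G-5$, and a short computation shows that the resultant step (\ref{19}) lowers the degree by exactly one. Hence $ac$ has degree $2h_G-n-2$, and the required inequality $2h_G-n-2<N_G-n-1$ is exactly $h_G\leq N_G/2$. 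Verifying that these homogeneity/degree bounds genuinely persist through every reduction, and that (as for the five-invariant) $D^n_G$ is independent of the edge ordering so that the induction is well posed, is the technical heart of the argument; the elementary counting is routine once the degrees are under control.
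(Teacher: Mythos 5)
The paper offers no proof of Theorem \ref{thm4} beyond citing Thm.~29 of \cite{BS}, and your proposal reconstructs essentially that argument: fibrewise elimination of one variable at a time using linearity, the Chevalley--Warning--Ax estimate (which is precisely where $h_G\le N_G/2$ enters) to kill the leading-coefficient correction term, Dodgson identities to assemble the five-invariant, and induction through the resultant step, with the counting congruence and the degree bookkeeping ($\deg D^n_G=2h_G-n$, so $\deg(ac)=2h_G-n-2<N_G-n-1$) both correct. The only part that remains genuinely schematic is the base case $n=5$ --- in \cite{BS} the elimination of the first five variables is the longest portion of the proof, since after the first step one is counting points on intersections of several Dodgson hypersurfaces rather than on a single one, and the identification of the surviving term with the $2\times 2$ determinant (\ref{18}) requires the specific Dodgson/Jacobi identities --- but the ingredients you name for it are the right ones.
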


For general graphs above a certain loop order and any ordering on their edges, there will come a point where ${D}^n_G(e_1,\ldots, e_n)$ is irreducible (typically for $n=5$).
Thus the generic graph is not denominator reducible.
The $c_2$-invariant of a graph $G$ vanishes if $h_G<N_G/2$ (Cor.\ 2.10 in \cite{SchnetzFq}).

\section{Results}\label{results}

Up to loop order 10 we have a total of 284 prime ancestors (see Tab.\ 2). With the first 6 primes (whose product is 30030) we can distinguish 145 $c_2$-invariants, which are listed in
Tab.\ 7 and Tab.\ 8 at the end of this section. For the 16 prime ancestors up to loop order 8 we have determined the $c_2$-invariant for at least the first 26 primes;
for the 37 graphs at loop order 9 the $c_2$-invariant is known for at least the first 12 primes (see Tab.\ 6). At loop order 10 we evaluated the $c_2$-invariant for many graphs for $p=17$
and beyond without resolving any more $c_2$-invariants.
Therefore we expect that the total number of $c_2$-invariants up to loop order 10 does not significantly exceed 145. Although a few of the many identities among $c_2$-invariants
are explained by twist and Fourier identities (see Sect.\ \ref{GN}) most  do not seem to follow from any known identities.

We found three types of $c_2$-invariants: quasi-constant, modular, and unidentified sequences.
\subsection{Quasi-constant graphs}

\begin{figure}[ht]
\epsfig{file=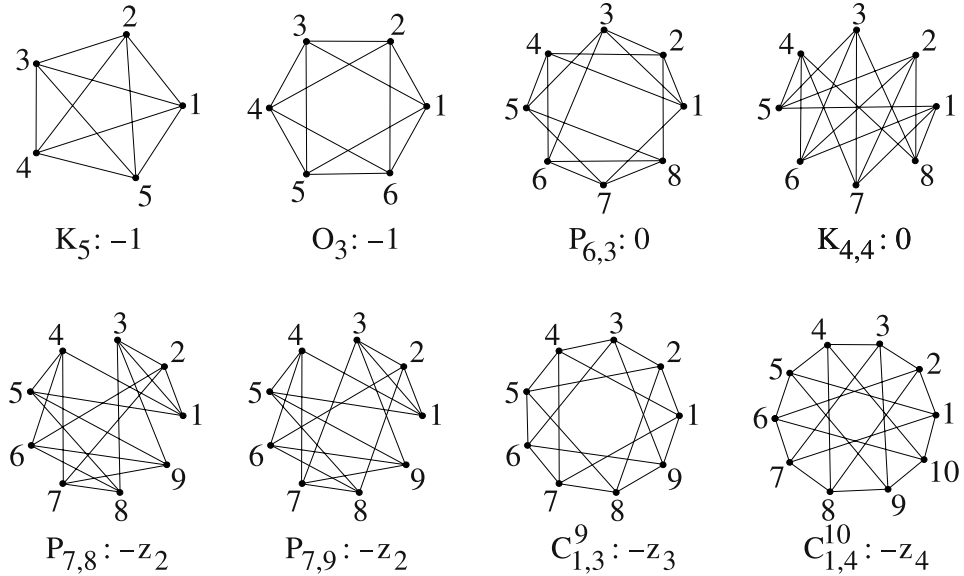,width=\textwidth}
\caption{Quasi-constant $\phi^4$-graphs with their $c_2$-in\-vari\-ants. The ancestor of $O_3$ is $K_5$, the ancestor of $P_{6,3}$ is $K_5^2$. The graphs
$K_5$, $K_{4,4}$, $P_{7,8}$, $P_{7,9}$, $C_{1,3}^9$, $C_{1,4}^{10}$ are the smallest examples of ancestors with their $c_2$-invariant.
The names of the graphs are taken from \cite{CENSUS} and $z_\bullet$ is defined in (\ref{21}).}
\end{figure}

\begin{defn}\label{def7}
A completed primitive graph $G$ is quasi-constant (quasi $c$)
if there exists an $m\in\NN_+$ such that $c_2(G)_{p^{mn}}\equiv c\mod p^{mn}$ for almost all primes $p$ and all $n\in\NN_+$.
\end{defn}
For $k\geq 2$, consider the following residue symbol, denoted $z_k$, which depends on the number of $k$-th roots of unity in $\FF_q$,
\begin{equation}\label{21}
z_k(q)=\left\{
\begin{array}{rl}
1&\hbox{if }\#\{x\in\FF_q :x^k=1\}=k,\\
0&\hbox{if gcd}(k,q)>1,\\
-1&\hbox{otherwise.}
\end{array}\right.
\end{equation}
It is a quasi-constant (quasi $1$). There is an abundance of quasi $-1$ $c_2$-invariants, which  is partly explained by denominator reduction, Thm.\ \ref{thm4}.
\begin{lem}
Let $G$ be a graph with $N_G=2h_G$, which  is denominator reducible up to the penultimate stage: i.e.\ there exists a sequence on the edges of $G$ such that $D^{N_G-2}_G$ is defined.
Then $c_2(G)_q$ is either 0 or quasi $-1$.
\end{lem}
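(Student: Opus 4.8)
The plan is to evaluate $c_2(G)_q$ through the denominator-reduction formula of Theorem~\ref{thm4} at the penultimate step, and then to recognise the point-count that appears as that of a binary quadratic form whose arithmetic is governed by a single integer discriminant.

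Since $N_G = 2h_G$ we have $h_G = N_G/2$, so the hypothesis $h_G \le N_G/2$ of Theorem~\ref{thm4} holds, and I would apply it with $n = N_G - 2 < N_G$. Because $N_G = 2h_G$ is even the sign $(-1)^{N_G-2}$ is trivial, so
\[
c_2(G)_q \equiv [D^{N_G-2}_G(e_1,\ldots,e_{N_G-2})]_q \mod q .
\]
Everything then reduces to the affine zero-count of $D^{N_G-2}_G$ over $\FF_q$, and the key structural claim is that this polynomial is a binary quadratic form with constant integer coefficients. To see this I would first record that the five-invariant $D^5_G$ is homogeneous of degree $2h_G - 5$ in the $N_G-5$ remaining edge variables, which follows from the determinantal formula (\ref{18}) together with the fact that each Dodgson minor $\Psi^{I,J}_{G,K}$ is homogeneous of degree $h_G - |I|$. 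I would then argue by induction that $D^n_G$ is homogeneous of degree $2h_G - n$ in the $N_G - n$ variables not yet eliminated: if $D^n_G = (a x_{n+1} + b)(c x_{n+1} + d)$ with the two factors homogeneous, then the resultant $ad - bc$ is again homogeneous, of degree exactly one less, so the degree drops by one as each variable is removed. Setting $n = N_G - 2 = 2h_G - 2$ gives degree $2$ in the two surviving variables $x_{N_G-1}, x_{N_G}$, so that
\[
D^{N_G-2}_G = \alpha\, x_{N_G-1}^2 + \beta\, x_{N_G-1} x_{N_G} + \gamma\, x_{N_G}^2, \qquad \alpha, \beta, \gamma \in \ZZ .
\]

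It then remains to count the zeros of this form in terms of its discriminant $\Delta = \beta^2 - 4\alpha\gamma \in \ZZ$. For odd $p$ the number of projective zeros of a nonzero binary quadratic over $\FF_q$ is $1 + \chi_q(\Delta)$, where $\chi_q$ is the quadratic residue character (with the convention $\chi_q(0)=0$), so the affine count is $[D^{N_G-2}_G]_q = 1 + (q-1)\big(1 + \chi_q(\Delta)\big)$, which reduces to $c_2(G)_q \equiv -\chi_q(\Delta) \bmod q$; the degenerate case $D^{N_G-2}_G \equiv 0$ gives $q^2 \equiv 0$ and is subsumed. If $\Delta = 0$, then $\chi_q(\Delta) = 0$ for every $q$ and $c_2(G)_q \equiv 0$, the constant value $0$. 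If $\Delta \ne 0$, I would take $m = 2$ in Definition~\ref{def7}: for every prime $p \nmid 2\Delta$ the residue $\Delta \in \FF_p^\times$ becomes a square in $\FF_{p^{2n}}$ for all $n$, since a generator of $\FF_p^\times$ is the $\tfrac{p^{2n}-1}{p-1}$-th power of a generator of $\FF_{p^{2n}}^\times$ and the exponent $\tfrac{p^{2n}-1}{p-1} = 1 + p + \cdots + p^{2n-1}$ is a sum of $2n$ odd numbers, hence even. Therefore $\chi_{p^{2n}}(\Delta) = 1$ and $c_2(G)_{p^{2n}} \equiv -1 \bmod p^{2n}$ for all $n$ and almost all $p$, which is exactly the assertion that $c_2(G)$ is quasi~$-1$.

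The main obstacle will be the second paragraph: one must make certain that homogeneity, and not merely the total degree, is preserved through every factorisation step of the reduction, so that the penultimate invariant is genuinely a binary quadratic with constant coefficients and no residual dependence on already-eliminated variables. Once this shape is secured, the final paragraph is routine, the only care being to discard the finitely many primes (notably $p = 2$ and those dividing $\Delta$ or the coefficients $\alpha, \gamma$) at which the form degenerates; excluding them is harmless for quasi-constancy.
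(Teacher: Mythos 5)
Your proposal is correct and follows essentially the same route as the paper's proof: apply Theorem~\ref{thm4} at step $n=N_G-2$, use the degree count (starting from $\deg D^5_G=N_G-5$ and dropping by one per resultant) to see that $D^{N_G-2}_G$ is an integral binary quadratic form, and then analyse its zero locus via the discriminant, passing to $\FF_{p^{2n}}$ so that $\Delta$ becomes a square and the count is $2p^{2n}-1$. Your explicit treatment of the homogeneity of the factors and of the sign $(-1)^{N_G-2}$ only makes precise what the paper leaves implicit.
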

\begin{proof}
Theorem \ref{thm4} is proved in \cite{BS} by iteratively taking resultants which (if non-zero) factorize into terms which are linear in the next variable. At each step the degree of the
resultant goes down by one. Because we start with $D^5_G$ which has degree $N_G-5$ we observe that $D^n_G$ is either zero or of degree $N_G-n$.
If $D^{N_G-2}_G=0$ then the $c_2$-invariant vanishes mod $q$.
Otherwise $D^{N_G-2}_G$ is a homogeneous quadratic polynomial in two variables with integer coefficients and defines a hypersurface $V$ of degree two in affine space $\mathbb{A}^2$.
Let $V_q$ denote its reduction in $\FF_q^2$.
If $D^{N_G-2}_G$ is a perfect square then $V$ is a double line and $V_q$ is either isomorphic to $\FF_q$ or---if $D^{N_G-2}_G$ vanishes mod $q$---to $\FF_q^2$.
In both cases the $c_2$-invariant vanishes  mod $q$.
Now we assume that $D^{N_G-2}_G$ is not a perfect square. Let $\Delta\in\ZZ^\times$ be the discriminant of $D^{N_G-2}_G$.
For every prime $p$ not dividing $\Delta$
and every $n\in\NN_+$,  $\Delta$ is a non-zero square in $\FF_{p^{2n}}$ and  $V_{p^{2n}}$ is isomorphic to a union of two lines meeting at a point.
Thus $|V_{p^{2n}}| = |\FF_{p^{2n}}| + |\FF_{p^{2n}}| - |pt| = 2p^{2n}-1$ which makes the $c_2$-invariant quasi $-1$.
\end{proof}

The graphs satisfying the conditions of the  lemma have the property that the maximal weight-graded piece of the graph cohomology is one-dimensional
and spanned by the Feynman differential form (see \cite{BD}).
\vskip1ex

For $\phi^4$ graphs we observe that  up to loop order 6 all  $c_2$-invariants are  in fact constant,  equal to $0$ or $-1$.
At loop order 7 all new $c_2$-invariants are quasi-constant, equal to
$-z_2$ or  $-z_3$. The only other quasi-constant $c_2$-invariant first appears at loop order 8, and equals $-z_4$. No new quasi-constant
$c_2$-invariants were found at loop orders $9$ and $10$, which leads us to conjecture that this list is complete.
Note that the cases $z_2,z_3, z_4$ become quasi-constant after excluding  the prime $2$, or by adjoining 3rd and 4th roots of unity, respectively.
These are precisely the extensions of $\QQ$ by roots of unity which are trivial or quadratic. Thus the conjecture is partly supported by the previous lemma, which only involves quadratic extensions.

Note, however, that for many quasi-constant graphs an edge ordering that leads to $D^{N_G-2}_G$ is not known.
Furthermore, there is not a single graph with  $c_2=-z_3$ or $-z_4$ whose period is known.
What we do have is the conjectured period of two completed primitive graphs at loop order 7 (graphs $P_{7,8}$ and $P_{7,9}$ in \cite{CENSUS}) with $c_2$-invariant $-z_2$.
They are weight 11 multiple zeta values, namely \cite{B3}
\begin{eqnarray}
P_{7,8}&=&\frac{22383}{20}\zeta(11)-\frac{4572}{5}[\zeta(3)\zeta(3,5)-\zeta(3,5,3)]-700\zeta(3)^2\zeta(5)\nonumber\\
&&\quad+\,1792\zeta(3)\left[\frac{27}{80}\zeta(3,5)+\frac{45}{64}\zeta(5)\zeta(3)-\frac{261}{320}\zeta(8)\right]\hbox{ and}
\end{eqnarray}
\begin{eqnarray}
P_{7,9}&=&\frac{92943}{160}\zeta(11)-\frac{3381}{20}[\zeta(3)\zeta(3,5)-\zeta(3,5,3)]-\frac{1155}{4}\zeta(3)^2\zeta(5)\nonumber\\
&&\quad+\,896\zeta(3)\left[\frac{27}{80}\zeta(3,5)+\frac{45}{64}\zeta(5)\zeta(3)-\frac{261}{320}\zeta(8)\right].
\end{eqnarray}

To summarize the results for quasi-constant graphs, the 10 loop data is consistent with the following, rather surprising, conjecture.
\begin{con}\label{con5}
If a completed primitive graph $\Gamma$ is quasi-constant then its $c_2$-invariant is 0, $-1$,
$-z_2$, $-z_3$, or $-z_4$ [see (\ref{21})].

If $c_2[\Gamma]_q\equiv-1\mod q$ then the ancestor of $\Gamma$ is $K_5$.  
\end{con}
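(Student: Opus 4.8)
The plan is to reduce the whole classification to the behaviour of a single plane conic, via denominator reduction. By Theorem \ref{thm3} the $c_2$-invariant is constant on families, so it suffices to classify the quasi-constant values among the prime ancestors (finitely many at each loop order). For each such ancestor $\Gamma$, writing $G=\Gamma-v$, I would first try to exhibit an edge ordering that carries the denominator reduction all the way to the penultimate term $D^{N_G-2}_G$. Granting this, the preceding Lemma applies: $D^{N_G-2}_G$ is a binary quadratic form $ax^2+bxy+cy^2$ over $\ZZ$, and Theorem \ref{thm4} gives $c_2(G)_q\equiv(-1)^{N_G-2}[D^{N_G-2}_G]_q\bmod q$. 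Since $N_G=2h_G$ is even, the sign is $+1$, so $c_2$ is literally the point-count of this conic.

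The next step is to make the dependence on the discriminant $\Delta=b^2-4ac$ explicit. Over $\FF_p$ a nondegenerate $D^{N_G-2}_G$ defines two lines through the origin, which are $\FF_p$-rational or Galois-conjugate according to whether $\Delta$ is a square, so $[D^{N_G-2}_G]_p\equiv-\left(\frac{\Delta}{p}\right)\bmod p$ and hence
\begin{equation}\label{c2char}
c_2(G)_p\equiv-\left(\frac{\Delta}{p}\right)\bmod p
\end{equation}
for all good $p$. Thus $c_2$ is, up to sign, the quadratic Dirichlet character cut out by $\QQ(\sqrt\Delta)$: a vanishing or perfect-square $\Delta$ gives the constants $0,-1,-z_2$, while $\QQ(\sqrt\Delta)=\QQ(\sqrt{-3})=\QQ(\zeta_3)$ and $\QQ(\sqrt\Delta)=\QQ(\sqrt{-1})=\QQ(\zeta_4)$ give exactly $-z_3$ and $-z_4$ [see (\ref{21})]. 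The entire content of the first assertion is therefore the claim that the only quadratic fields realizable this way are $\QQ$, $\QQ(\sqrt{-3})$ and $\QQ(\sqrt{-1})$, equivalently $\Delta\in\{\square,\,-3\,\square,\,-4\,\square\}$ — precisely the trivial and quadratic cyclotomic extensions singled out in the introduction.

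For the second assertion I would specialize (\ref{c2char}) to the genuinely constant case $c_2\equiv-1$, which forces $\Delta$ to be a nonzero perfect square, so that the terminal conic splits into two $\QQ$-rational lines: a rank-one Tate situation. Using \cite{BD} and \cite{BY}, which identify the top weight-graded piece of the graph cohomology with the class detected by $c_2$, I would argue that this Tate piece is the one spanned by the Feynman form, and then descend within $\mathcal{G}_1$ using Proposition \ref{prop2} (double-triangle invariance) together with $c_2[K_5]_q\equiv-1$ from (\ref{12}). The target is to show that $K_5$ is the \emph{unique} prime ancestor carrying this rank-one Tate class, so that every $\Gamma$ with $c_2[\Gamma]_q\equiv-1$ lies in its family.

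The main obstacle is exactly the input I have been granting: that every quasi-constant prime ancestor is denominator-reducible to the penultimate stage. As the surrounding text notes, for many quasi-constant graphs no such ordering is known, and without it the reduction to a conic and the character computation (\ref{c2char}) never get started. Even granting reducibility, the constraint $\Delta\in\{\square,-3\square,-4\square\}$ is not a formal consequence of the resultant bookkeeping in the Lemma: it asserts that the arithmetic of the terminal conic of a $\phi^4$ graph is bounded in a very strong way, and proving it would seem to require a new structural result — for instance a bound on which Artin motives can occur in graph hypersurfaces of the degree and dimension forced by the $\phi^4$ condition. The $K_5$-uniqueness in the second assertion is of the same character and is the part I expect to be hardest to make unconditional.
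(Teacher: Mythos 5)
You are attempting to prove Conjecture~\ref{con5}, and the first thing to say is that the paper contains no proof of it: its only support is the exhaustive computation of the $c_2$-invariants of all 284 prime ancestors through loop order 10, plus the heuristic remark that the preceding Lemma ``only involves quadratic extensions.'' Your reduction to a terminal binary quadratic form and its discriminant is a faithful unpacking of exactly that heuristic (your formula $c_2(G)_p\equiv-\bigl(\tfrac{\Delta}{p}\bigr)$ is indeed the content of the Lemma's proof, and the sign $(-1)^{N_G-2}=+1$ is handled correctly), so up to that point you are reconstructing the paper's motivation rather than establishing anything beyond it. The gaps are the ones you name yourself, and they are fatal: (a) the paper states explicitly that for many quasi-constant graphs no edge ordering reaching $D^{N_G-2}_G$ is known, so the conic never materializes; and (b) even when it does, nothing bounds the discriminant class to $\{\square,-3\square,-4\square\}$ --- the Lemma only shows $c_2$ is $0$ or quasi $-1$ in the sense of Def.~\ref{def7}, which is compatible with \emph{every} quadratic character. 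That the restriction to $\phi^4$ must enter in an essential and currently unavailable way is demonstrated concretely by the paper: the non-$\phi^4$ log-divergent graph $G_5$ at 8 loops has $c_2\equiv-y_5$, the character of $x^2+x-1$, i.e.\ discriminant $5$ and the real quadratic field $\QQ(\sqrt{5})$. So the conic mechanism by itself provably does \emph{not} yield the asserted list; no structural result of the kind you would need is known.

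The second assertion is where your sketch is not merely incomplete but goes in a direction that cannot work. Knowing that $c_2\equiv-1$ forces a split terminal conic (a ``rank-one Tate'' situation) says nothing about the ancestor: graphs with $c_2\equiv 0$ are also Tate in this sense, and the paper stresses that there are many prime ancestors with vanishing $c_2$ (the smallest being $K_{3,4}$ at 6 loops), so the equivalence class in $\mathcal{G}_1$ is not determined by the motive of the terminal polynomial. The claim that $K_5$ is the \emph{unique} prime ancestor with $c_2\equiv-1$ is a combinatorial statement about $\mathcal{G}_1$ that the paper verifies case by case (via Thm.~\ref{thm3} and the tables); Prop.~\ref{prop2} lets you propagate the value $-1$ within the $K_5$ family but provides no mechanism for excluding a second prime ancestor with the same invariant at higher loop order. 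In short, this is an open conjecture; your proposal correctly identifies why it is plausible and where the difficulty lies, but it does not close, and could not with the tools cited, any of the gaps separating the heuristic from a proof.
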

The second part of the conjecture broadly states that primitive $\phi^4$ graphs fall into three categories: the weight-drop graphs with $c_2$-invariant 0, complicated graphs which have non-polynomial point counts (whose prime ancestors have at least 7 loops), and a single family of graphs whose ancestor is $K_5$. The last category has many distinguished combinatorial properties. 
\vskip5mm

\begin{tabular}{r|r||l}
weight&level&modular form\\\hline
2&11&$\eta(z)^2\eta(11z)^2$\\[2pt]
2&14&$\eta(z)\eta(2z)\eta(7z)\eta(14z)$\\[2pt]
2&15&$\eta(z)\eta(3z)\eta(5z)\eta(15z)$\\[2pt]
3&7&$\eta(z)^3\eta(7z)^3$\\[2pt]
3&8&$\eta(z)^2\eta(2z)\eta(4z)\eta(8z)^2$\\[2pt]
3&12&$\eta(2z)^3\eta(6z)^3$\\[2pt]
4&5&$\eta(z)^4\eta(5z)^4$\\[2pt]
4&6&$\eta(z)^2\eta(2z)^2\eta(3z)^2\eta(6z)^2$\\[2pt]
5&4&$\eta(z)^4\eta(2z)^2\eta(4z)^4$\\[2pt]
6&3&$\eta(z)^6\eta(3z)^6$\\[2pt]
6&4&$\eta(2z)^{12}$\\[2pt]
8&2&$\eta(z)^8\eta(2z)^8$
\end{tabular}
\vskip5mm

\noindent Table 3: Newforms $f(z)$ from Tab.\ 1 which are expressible as products of the Dedekind $\eta$-function.
\vskip5mm

\subsection{Modular graphs}
\begin{figure}[ht]
\epsfig{file=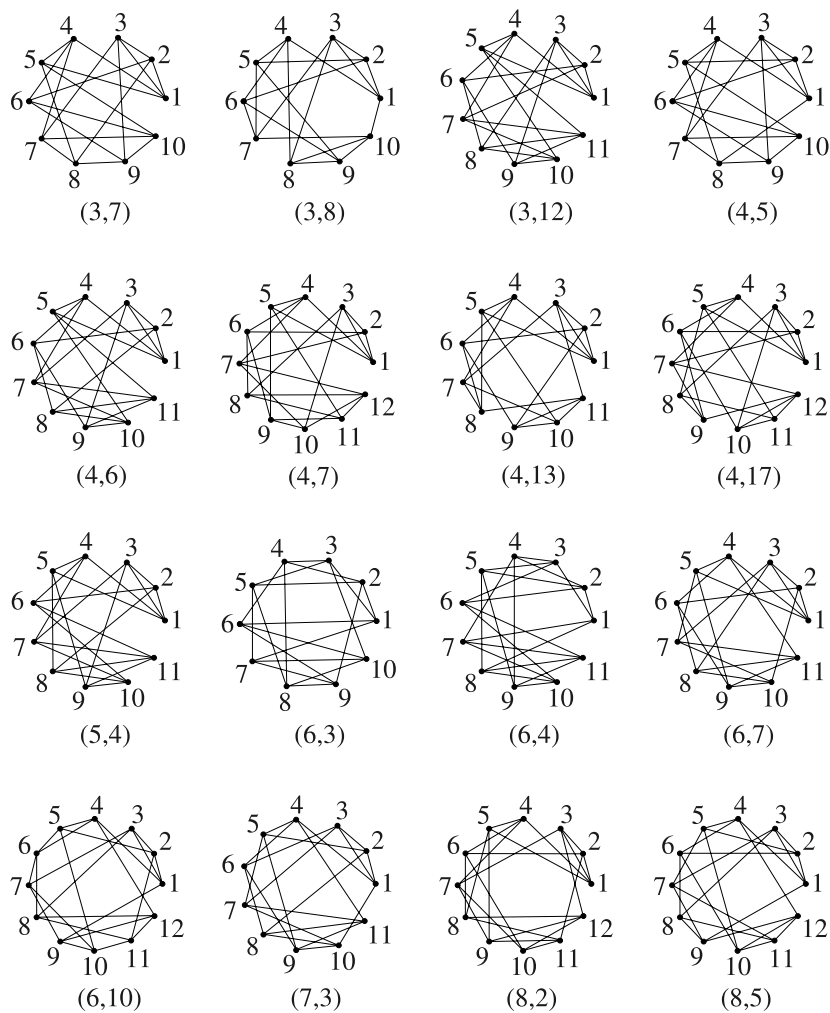,width=\textwidth}
\caption{The 16 modular $\phi^4$ graphs up to loop order 10. The numbers beneath the figures refer to the (weight, level) of the corresponding modular form.}
\end{figure}

The first modular graphs (see Defn.\ \ref{def6}) appear at loop order 8. In fact all four non quasi-constant graphs at loop order 8 are modular with respect to the newforms
with weight and level equal to $(3, 7)$, $(3, 8)$, $(4, 5)$ and $(6, 3)$. The graphs are depicted in Fig.\ 5.
At loop order 9 we are able to identify another 7 modular graphs whereas at loop order 10 we find only 5 new modular sequences. All modular graphs were found to have very small levels ($\leq 17$, see Tab.\ 1). A search for much higher levels did not provide any new fits. Nine out of the 16 modular graphs 
correspond to products of the Dedekind $\eta$-function (see Tab.\ 3 and \cite{etamod}).
All modular $c_2$-invariants are confirmed for at least the first 11 primes (whose product is
$\sim2\cdot 10^{11}$). 

An unexpected observation is the absence of weight 2.
\begin{con}\label{con6}
If a completed primitive graph is modular with respect to the modular form $f$ then the weight of $f$ is $\geq3$.
\end{con}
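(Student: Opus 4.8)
\textbf{Proof proposal for Conjecture \ref{con6}.}
The plan is to translate the weight of $f$ into the dimension and degree of the variety controlling $c_2[\Gamma]$, and then to show that $4$-regularity forbids the lowest non-trivial case. By Theorem \ref{thm4}, $c_2(\Gamma\backslash v)_q\equiv(-1)^n[D^n_G]_q\bmod q$, and, as recorded in the proof of the quasi-constant Lemma above, whenever $D^n_G$ is defined and nonzero it is homogeneous of degree $N_G-n$ in the surviving $N_G-n$ variables (recall $N_G=2h_G$). Hence $d:=N_G-n$ equals both the degree of $D^n_G$ and its number of variables, so $V_n:=\{D^n_G=0\}\subset\PP^{d-1}$ is a degree-$d$ hypersurface of dimension $d-2$ whose anticanonical class is the hyperplane class: $V_n$ is Calabi--Yau, with $h^{d-2,0}(V_n)=1$. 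Its non-Tate part is therefore a Hodge structure carrying a one-dimensional $(d-2,0)$-piece, exactly the shape that becomes a single weight-$(d-1)$ newform once it is rank $2$ (so that, by Eichler--Shimura, $f$ has weight $(d-2)+1=d-1$). The values $d=2,3,4$ give a pair of points, a plane cubic, and a quartic K3 surface, reproducing the observed quasi-constant ($d=2$), weight-$2$, and weight-$3$ cases; the weight-$3$ case $d=4$ is precisely the singular-K3 mechanism of \cite{BS} at $8$ loops. Weight $2$ thus corresponds exactly to $d=3$, i.e.\ to $D^n_G(x,y,z)$ being a \emph{smooth} plane cubic, one reduction step before the penultimate (quasi-constant) stage $d=2$.

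In these terms Conjecture \ref{con6} asserts that for a primitive $\phi^4$ graph the cubic form reached at $d=3$ is never smooth: it is always singular or reducible, hence of geometric genus $0$ and rational, which feeds back into the quasi-constant situation rather than producing a genuine weight-$2$ newform. The evidence that this is a statement about $4$-regularity, and not a formal consequence of denominator reduction, is in Table 1: the weight-$2$ forms of levels $11,14,15$ are realized, but only by the non-$\phi^4$ ($\phi^{>4}$) graphs, which therefore \emph{do} yield smooth plane cubics at this stage. The content of the conjecture is thus that the vertex-degree-$4$ condition on the completion $\Gamma$ forces a specific degeneration of $D^n_G(x,y,z)$.

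Concretely I would proceed in two steps. First, (i) make the dictionary rigorous, using the de Rham framing of \cite{BD} together with Conjecture \ref{con2} to identify the rank-$2$ motive detected by $c_2$ with the framing of the graph cohomology determined by the integrand of (\ref{2}); this should fix its weight as $d-2$ and exclude the possibility that a weight-$1$ (elliptic) summand instead hides in a lower cohomological degree of a higher-dimensional $V_n$. Second, (ii) write $D^n_G$ at the $d=3$ stage explicitly through the Dodgson polynomials $\Psi^{I,J}_{G,K}$ and exhibit, using the contraction--deletion and Dodgson identities among them specialised under $4$-regularity, either a rational singular point of the cubic (a coordinate point at which the form and all its first partials vanish) or a linear factor.

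The crux will be step (ii): extracting from the combinatorial $4$-regularity condition an algebraically forced singularity of the cubic. Since the intermediate $D^k_G$ depend on the chosen edge order while $D^n_G$ itself does not, a robust argument should be either intrinsic---a statement about the discriminant of the cubic (equivalently, the degeneration of $H^1(V_n)$) that is manifestly order-independent---or a finite classification of the graph-theoretic configurations that can survive to the $d=3$ stage in $\phi^4$, checked case by case. A secondary obstacle, which I expect to be controlled by the weight-purity established in (i), is to rule out elliptic contributions that enter as sub- or quotient-motives of the middle cohomology of a higher $V_n$ rather than as an isolated smooth plane cubic.
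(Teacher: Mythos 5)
There is a genuine gap here, and it begins with the status of the statement itself: in the paper, this is a \emph{conjecture}. The authors offer no proof, only computational evidence --- namely that any counterexample with $\leq 10$ loops would have to match a weight-2 newform of level $\geq 1200$, and none does. Your text is likewise not a proof but a research programme, and you say so yourself: the crux, step (ii), is left entirely open. So the missing piece is the whole of the intended argument.

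Beyond that, several structural claims on which your programme rests are not sound. First, $V_n=\{D^n_G=0\}$ is indeed cut out by a degree-$d$ form in $d$ variables, but these varieties are in general highly singular, so the conclusion that $V_n$ is Calabi--Yau with $h^{d-2,0}=1$ and that its non-Tate part is a rank-2 motive of weight $d-2$ does not follow; the dictionary ``$f$ has weight $d-1$'' is a heuristic, not a theorem. Second, and more seriously, the reduction does not in general reach the stage $d=3$: as the paper notes, for generic graphs $D^n_G$ becomes irreducible already around $n=5$, and the modular $c_2$-invariants at 9 and 10 loops are read off from point counts of varieties of large dimension (degree 7 in 7 variables in the most accessible unidentified cases). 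A weight-2 newform could therefore arise from an elliptic sub- or quotient-motive of the cohomology of such a higher-dimensional $V_n$, not from a smooth plane cubic at $d=3$; your step (i), which is supposed to exclude this, leans on Conjecture \ref{con2} and an unproved purity statement, so it cannot carry that weight. Third, the inference that the non-$\phi^4$ weight-2 examples of Table 1 ``do yield smooth plane cubics at this stage'' is not supported by anything in the paper. In short, your reduction of Conjecture \ref{con6} to ``the cubic reached at $d=3$ is always singular for 4-regular completions'' is itself unestablished, and the singularity statement is not proved either; the paper's actual content on this point is the experimental exclusion described above, nothing more.
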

In particular, $\phi^4$ point-counts are conjecturally not of general type (compare the main theorem of  \cite{BB}).
In support of this conjecture,  we find by computation that any counter-example $\Gamma$ with $\leq 10$ loops, would have to correspond to a modular
form of weight $2$ and  level $\geq 1200$.

\subsection{Unidentified graphs}

\begin{figure}[ht]
\epsfig{file=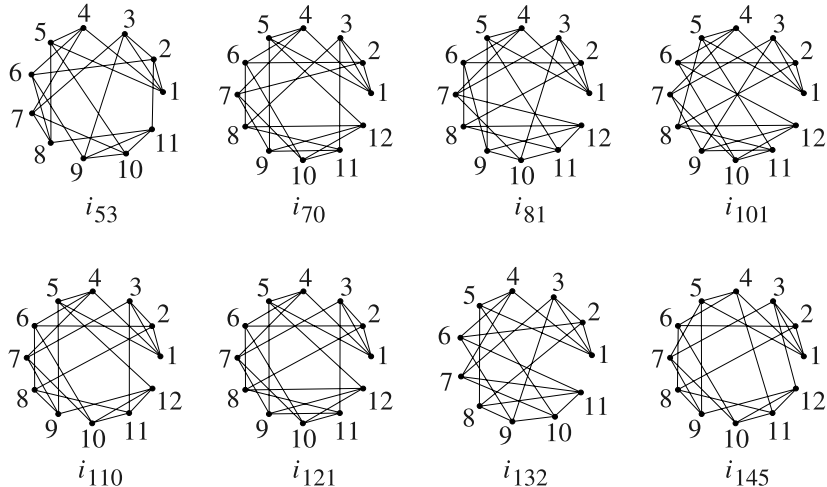,width=\textwidth}
\caption{Eight graphs with readily accessible but as-yet unidentified $c_2$-invariants. Their point-counts mod $p$ for the first 50 primes are listed in Tab.\ 5.}
\end{figure}

The first unidentified $c_2$-invariants appear at loop order 9 where 10 sequences cannot be associated to quasi-constants or modular forms. At loop order 10, there are  114 as  yet unidentified sequences
out of 231 prime ancestors. Because modular $c_2$-invariants appear to have low levels it is reasonable to expect that
the unidentified sequences are not modular with respect to congruence subgroups of $S\hspace{-1pt}L_2(\ZZ)$.

The most accessible unidentified sequences are those for which the denominator reduction algorithm continues for as long as possible, and  provides  homogeneous  polynomials of degree 7 in  7  variables.
In Tab.\ 5 we list $-c_2$ for the first 50 primes for the eight graphs which fall into this class. The graphs are depicted in Fig.\ 6.

Using the rescaling technique presented in \cite{SchnetzFq} we were able to reduce the $c_2$-invariant of the unidentified sequence $i_{101}$ one step further to   a projective 4-fold of degree 6.
It is given explicitly by the homogeneous polynomial
\begin{equation}\label{23}
AB  +x_2^2(x_4+x_5)^2 x_5x_6 + C x^2_5x_6 
\end{equation}
where
\begin{eqnarray}
A&=& (x_2+x_5) (x_3+x_6)x_4+x_2x_3x_5+x_2x_3x_6+x_2x_5x_6+ x_3x_5x_6\nonumber  \\
B&=&   x_1 (x_1x_3+x_3x_4+x_3x_6-x_2x_4-x_2x_5-x_5x_6)  \nonumber \\
C& =& (x_4+x_6) (x_2x_4+x_2x_5-x_2x_3-x_3x_4-x_3x_6-x_4x_6 ).\nonumber 
\end{eqnarray}
The $c_2$-invariant for this sequence is listed as $i_{101}$ in Tab.\ 5 for the first 100 primes. Remarkably, $i_{101}(p)$ is a square in $\FF_p$ if and only if $p\not\equiv-1$ mod 12.
\vskip5mm

\subsection{Non-$\phi^4$ graphs}
\begin{figure}[ht]
\epsfig{file=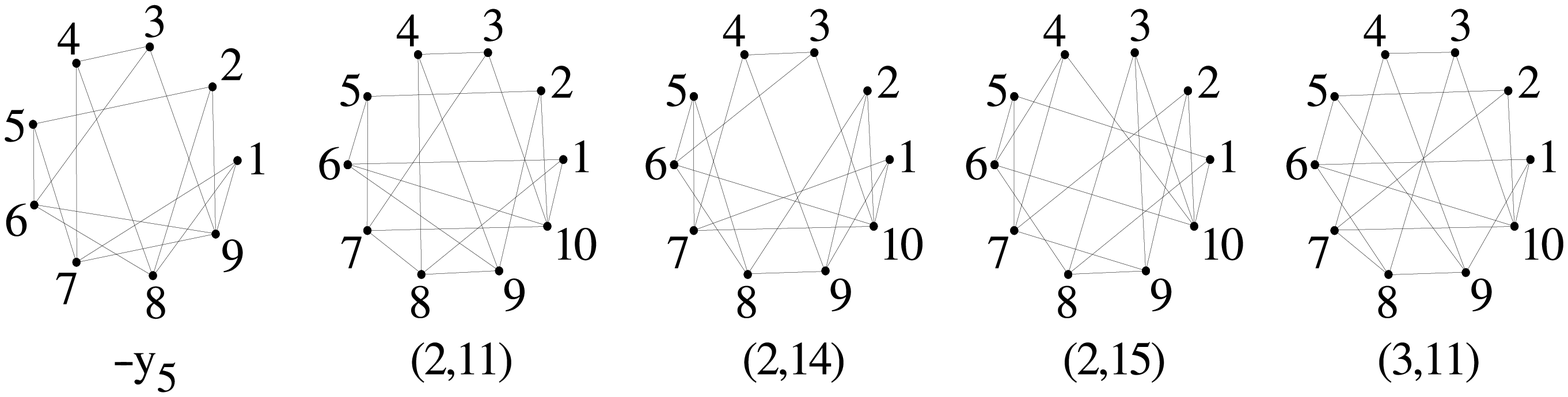,width=\textwidth}
\caption{Non-$\phi^4$ graphs which are quasi-constant ($-y_5$, see Eq.\ (\ref{24})) or modular with subscript (weight, level). Note that all graphs have a non-$\phi^4$ (5-valent) vertex.}
\end{figure}

Since denominator reduction Thm.\ \ref{thm4} applies to any connected graph $G$ satisfying $N_G=2h_G$,  we can also study the $c_2$-invariants of graphs which are not necessarily in $\phi^4$ theory
(i.e.\ which have a vertex of degree $\geq 5$). The condition $N_G= 2h_G$ means that  these graphs are superficially log divergent.
If, furthermore, they are free of subdivergences  (\ref{defnprimdiv1}),  they are primitive divergent and have a convergent  period (\ref{2}).
Note that there is no completion conjecture for the $c_2$-invariants of non-$\phi^4$ graphs.

Because the presence of a vertex of degree $\leq2$ causes the $c_2$-invariant to vanish by Lemma 17 (10) in \cite{BS} we may restrict ourselves to  graphs with minimum vertex degree $\geq3$.
All 23 graphs of this type up to loop order 6 have $\leq12$ edges and hence constant $c_2$-invariant by  the work of Stembridge  \cite{STEM}.
We find that 11 of the $c_2$-invariants are $-1$ whereas all others are 0.

At loop order 7 we have 133 graphs. There are no new  $c_2$-invariants in the sense that they already occur in $\phi^4$.

Among the 1352 graphs at 8 loops there is the first non-$\phi^4$ $c_2$-invariant. Its graph $G_5$ is drawn in Fig.\ 7.  It satisfies $c_2(G_5)_q\equiv -y_5\mod q$, where
\begin{equation}\label{24}
y_5=\#\{x\in\FF_q:x^2+x-1=0\}-1.
\end{equation}
and has been verified for the first 50 primes.

At loop order 9, we find modular forms of type (weight, level) equal to $(2,11)$, $(2,14)$, $(2,15)$ (see Tab.\ 1) which do not occur in $\phi^4$ theory.
We also obtain a
newform of  weight 3 and level 11 which is absent in $\phi^4$-theory up to loop order 10.
Moreover, we find the two modular forms of weight 4, level 7 and weight 4, level 17 which occur in $\phi^4$ at 10 loops. 
These modular $c_2$-invariants  are confirmed for at least 11 primes. 
Finally, the $c_2$-invariant of 15 graphs give 7 new unidentified sequences which we do not list here.

\section{Methods}

We used B.D. McKay's {\it nauty} to generate graphs \cite{NAU}, and Sage  \cite{Sage} to generate  modular forms (supplemented by comprehensive tables in weights 2, 4, 6 which can be found in \cite{Meyer}).
The first reduction to prime ancestors reduces the total number of graphs to be considered from 1354 to 231 in the case of $\phi^4$ theory at 10 loops. This assumes the completion conjecture 
\ref{con4}.  

The denominator reduction  then reduces the $c_2$-invariant  of each (uncompleted) graph to the point-count of a hypersurface of smaller degree, as follows.
Primitive-divergent $\phi^4$ graphs with
at least five vertices have at least two 3-valent vertices with no common edge joining them.
If we choose the first 6 edges $e_1,\ldots,e_6$ to contain these two three-valent vertices
then one can show \cite{BROWN} that $D^5$ and $D^6$ necessarily factorize. Therefore for $\phi^4$ graphs we can always apply denominator reduction
twice to reduce the number of variables by 7.  Since  $D^7$ is homogeneous, we can eliminate  a further  variable with the effect that at 10 loops we have to count the zeros of a polynomial in at most 12 variables.
A further variable can  effectively be eliminated using the fact that $D^7$ is quadratic in every variable and by computing tables of quadratic residues modulo $p$. 
Altogether for the prime 13 we need at 10 loops a maximum of $13^{11}\approx 1.8\cdot 10^{12}$ evaluations of typically $\sim$30~000 monomials to determine
$c_2(G)_{13}$. For only 10 out of 231 prime 10 loop ancestors we did not manage to find more than the minimum number of reductions. The point-counts for these 10 graphs with primes
$p=11$ and $p=13$ were performed at the Erlanger RRZE Cluster using a total computation time of 18 core years. The results are $i_{20}$ (twice), $i_{24}$, $i_{34}$, $i_{49}$, $i_{60}$, $i_{64}$,
$i_{75}$, $i_{78}$, $i_{85}$ in Tab.\ 8. A further 16 core years were invested in the sequence $i_{49}$ at prime 17 to rule out an accidental coincidence with a weight 2 modular form of level 624 for the first 6 primes.
Additionally the point-count of the weight 8 level 5 modular graph for the primes 29 and 31 consumed 1 core year on the RRZE Cluster.
For the 9 loop graph $P_{9,190}$ (see Tab.\ 6) we evaluated the point counts of the primes 29, 31, and 37 on the RRZE Cluster in altogether 3 core years. All other computations were performed
using an office PC.
\newpage

\section{Tables}

\begin{tabular}{r||r|r|r|r|r|r|rrr||r|rrr||r}
$-c_2$&\multicolumn{7}{l}{loop order $\ell=$}&&$-c_2$&\multicolumn{2}{l}{$\ell=$}&&$-c_2$&$\ell=$\\
&3&4&5&6&7&8&9&&&8&9&&&9\\\hline
1&1&1&2&7&42&393&4698&&(3,12)&0&8&&$i_{53}$&11\\
$z_2$&0&0&0&0&5&49&763&&(4,5)&3&73&&$i_{81}$&6\\
$z_3$&0&0&0&0&1&19&375&&(4,6)&0&36&&$i_{87}$&5\\
$z_4$&0&0&0&0&0&1&15&&(4,7)&0&5&&$i_{104}$&1\\
$y_5$&0&0&0&0&0&1&34&&(4,13)&0&8&&$i_{106}$&6\\
(2,11)&0&0&0&0&0&0&6&&(4,17)&0&2&&$i_{110}$&10\\
(2,14)&0&0&0&0&0&0&4&&(5,4)&0&29&&$i_{121}$&6\\
(2,15)&0&0&0&0&0&0&2&&(6,3)&1&7&&$i_{125}$&6\\
(3,7)&0&0&0&0&0&6&133&&(6,4)&0&3&&$i_{132}$&11\\
(3,8)&0&0&0&0&0&4&81&&(6,7)&0&10&&$i_{141}$&6\\
(3,11)&0&0&0&0&0&0&3&&(7,3)&0&3&&$i_{\phi^{>4}}$&15
\end{tabular}
\vskip5mm

\noindent Table 4: All non-zero $c_2$-invariants of `log divergent' graphs (graphs for which the number of edges equals twice the number of loops) up to 9 loops.
The functions $z_\bullet$ are defined in Eq.\ (\ref{21}), $y_5$ is defined in Eq.\ (\ref{24}), (weight, level) refer to modular forms, and $i_\bullet$ refers to an unidentified sequence
listed in Tab.\ 8. $i_{\phi^{>4}}$ refers to unidentified sequences in non-$\phi^4$ graphs.
\vskip5mm

\begin{tabular}{r||r|r|r|r|r|r|r|r||r||r}
$p$&$i_{53}$&$i_{70}$&$i_{81}$&$i_{101}$&$i_{110}$&$i_{121}$&$i_{132}$&$i_{145}$&$p$&$i_{101}$\\\hline
2&0&0&0&1&1&1&1&1&233&232\\
3&1&1&2&0&0&1&2&2&239&179\\
5&1&4&2&1&4&3&1&4&241&196\\
7&4&2&2&4&6&3&4&6&251&183\\
11&6&1&6&6&0&10&1&9&257&135\\
13&6&0&5&9&0&8&0&10&263&94\\
17&14&7&14&16&9&14&4&8&269&268\\
19&5&1&10&9&3&1&11&1&271&242\\
23&15&1&1&17&6&14&6&13&277&210\\
29&11&10&27&28&24&3&23&17&281&261\\
31&21&30&16&1&28&9&2&22&283&9\\
37&23&5&12&30&4&24&13&20&293&240\\
41&1&28&6&8&17&19&12&4&307&289\\
43&33&31&15&6&36&10&13&21&311&51\\
47&33&14&0&11&36&11&6&8&313&108\\
53&47&8&48&13&46&19&44&14&317&113\\
59&25&25&11&44&16&8&29&30&331&43\\
61&40&5&17&42&46&26&14&23&337&100\\
67&60&22&10&33&24&11&35&54&347&97\\
71&53&59&30&65&29&33&15&27&349&273\\
73&46&55&36&64&22&14&2&67&353&334\\
79&25&68&50&38&72&35&43&14&359&335
\end{tabular}

\begin{tabular}{r||r|r|r|r|r|r|r|r||r||r}
$p$&$i_{53}$&$i_{70}$&$i_{81}$&$i_{101}$&$i_{110}$&$i_{121}$&$i_{132}$&$i_{145}$&$p$&$i_{101}$\\\hline
83&51&64&8&74&2&15&23&30&367&137\\
89&65&12&60&88&1&40&0&44&373&179\\
97&33&78&53&47&89&91&7&10&379&263\\
101&1&80&34&52&31&48&76&83&383&310\\
103&33&63&25&60&58&16&71&76&389&176\\
107&45&12&50&17&49&21&45&83&397&136\\
109&42&38&34&82&56&5&24&43&401&36\\
113&40&38&72&102&56&39&1&0&409&377\\
127&62&19&61&115&30&45&43&18&419&314\\
131&8&96&69&37&3&104&99&97&421&51\\
137&125&72&0&136&61&10&7&43&431&370\\
139&65&48&61&89&113&20&137&29&433&104\\
149&86&102&131&37&70&148&132&108&439&182\\
151&24&9&127&40&79&30&59&21&443&76\\
157&155&124&107&109&120&135&100&70&449&161\\
163&99&34&60&38&88&46&39&158&457&238\\
167&22&117&134&95&60&60&2&37&461&39\\
173&56&85&75&157&21&73&23&55&463&388\\
179&4&37&122&32&173&46&132&42&467&234\\
181&91&33&23&75&1&24&65&90&479&325\\
191&53&46&45&55&136&115&4&170&487&412\\
193&92&31&175&98&155&126&29&156&491&343\\
197&39&170&68&97&185&132&23&56&499&431\\
199&174&126&121&178&158&133&5&173&503&5\\
211&8&33&182&53&189&176&146&90&509&429\\
223&1&119&110&175&37&216&41&184&521&169\\
227&183&199&105&52&224&110&70&37&523&263\\
229&115&20&63&9&225&94&181&71&541&412
\end{tabular}
\vskip5mm

\noindent Table 5: The 8 most accessible unidentified sequences (see Fig.\ 6). We list $-c_2[\Gamma]_p$ for the first 50 primes and for the first 100 primes in case of $i_{101}$.
\vskip5mm

\begin{tabular}{r||r||r|r|r|r|r|r|r|r|r|r|r|r}
graph&$-c_2(p)$&\multicolumn{12}{l}{$p=$}\\
&&\hspace{4pt}2&\hspace{4pt}3&\hspace{4pt}5&\hspace{4pt}7&11&13&17&19&23&29&31&37\\\hline
$P_3$&1&1&1&1&1&1&1&1&1&1&1&1&1\\\hline
$P_{6,4}$&0&0&0&0&0&0&0&0&0&0&0&0&0\\\hline
$P_{7, 8}$&$z_2$&0&1&1&1&1&1&1&1&1&1&1&1\\ 
$P_{7, 9}$&$z_2$&0&1&1&1&1&1&1&1&1&1&1&1\\
$P_{7, 10}$&0&0&0&0&0&0&0&0&0&0&0&0&0\\
$P_{7, 11}$&$z_3$&1&0&4&1&10&1&16&1&22&28&1&1\\\hline
$P_{8, 32}$&0&0&0&0&0&0&0&0&0&0&0&0&0\\
$P_{8, 33}$&$z_3$&1&0&4&1&10&1&16&1&22&28&1&1\\
$P_{8, 34}$&0&0&0&0&0&0&0&0&0&0&0&0&0
\end{tabular}

\begin{tabular}{r||r||r|r|r|r|r|r|r|r|r|r|r|r}
graph&$-c_2(p)$&\multicolumn{12}{l}{$p=$}\\
&&\hspace{4pt}2&\hspace{4pt}3&\hspace{4pt}5&\hspace{4pt}7&11&13&17&19&23&29&31&37\\\hline
$P_{8, 35}$&$z_2$&0&1&1&1&1&1&1&1&1&1&1&1\\
$P_{8, 36}$&$z_3$&1&0&4&1&10&1&16&1&22&28&1&1\\
$P_{8, 37}$&(3, 7)&1&0&0&0&5&0&0&0&18&4&0&36\\
$P_{8, 38}$&(4, 5)&0&2&0&6&10&1&9&5&14&8&16&7\\
$P_{8, 39}$&(3, 8)&0&1&0&0&3&0&2&4&0&0&0&0\\
$P_{8, 40}$&$z_4$&0&2&1&6&10&1&1&18&22&1&30&1\\
$P_{8, 41}$&(6, 3)&0&0&1&2&8&1&15&14&11&27&29&32\\\hline
$P_{9, 154}$&$z_2$&0&1&1&1&1&1&1&1&1&1&1&1\\
$P_{9, 155}$&(3, 7)&1&0&0&0&5&0&0&0&18&4&0&36\\
$P_{9, 156}$&$z_2$&0&1&1&1&1&1&1&1&1&1&1&1\\
$P_{9, 157}$&(3, 12)&0&0&0&2&0&4&0&7&0&0&16&26\\
$P_{9, 158}$&(3, 8)&0&1&0&0&3&0&2&4&0&0&0&0\\ 
$P_{9, 159}$&$z_2$&0&1&1&1&1&1&1&1&1&1&1&1\\ 
$P_{9, 160}$&(3, 8)&0&1&0&0&3&0&2&4&0&0&0&0\\
$P_{9, 161}$&(5, 4)&0&0&1&0&0&9&16&0&0&24&0&16\\
$P_{9, 162}$&(4, 5)&0&2&0&6&10&1&9&5&14&8&16&7\\
$P_{9, 163}$&(4, 5)&0&2&0&6&10&1&9&5&14&8&16&7\\
$P_{9, 164}$&(3, 7)&1&0&0&0&5&0&0&0&18&4&0&36\\
$P_{9, 165}$&$i_{132}$&1&2&1&4&1&0&4&11&6&23&2&13\\
$P_{9, 166}$&(4, 6)&0&0&1&5&1&12&10&1&7&1&5&32\\
$P_{9, 167}$&(4, 13)&1&2&3&1&7&0&9&7&19&5&10&17\\
$P_{9, 168}$&(3, 8)&0&1&0&0&3&0&2&4&0&0&0&0\\
$P_{9, 169}$&$i_{53}$&0&1&1&4&6&6&14&5&15&11&21&23\\
$P_{9, 170}$&(5, 4)&0&0&1&0&0&9&16&0&0&24&0&16\\
$P_{9, 171}$&(4, 6)&0&0&1&5&1&12&10&1&7&1&5&32\\
$P_{9, 172}$&(4, 6)&0&0&1&5&1&12&10&1&7&1&5&32\\
$P_{9, 173}$&(6, 7)&0&1&4&0&1&3&12&16&7&25&13&36\\
$P_{9, 174}$&(4, 6)&0&0&1&5&1&12&10&1&7&1&5&32\\
$P_{9, 175}$&(4, 5)&0&2&0&6&10&1&9&5&14&8&16&7\\
$P_{9, 176}$&(6, 7)&0&1&4&0&1&3&12&16&7&25&13&36\\
$P_{9, 177}$&$i_{141}$&1&2&3&3&0&9&0&1&4&6&18&15\\
$P_{9, 178}$&$i_{125}$&1&2&0&2&6&5&14&12&20&18&6&12\\
$P_{9, 179}$&(6, 3)&0&0&1&2&8&1&15&14&11&27&29&32\\
$P_{9, 180}$&$i_{121}$&1&1&3&3&10&8&14&1&14&3&9&24\\
$P_{9, 181}$&$i_{106}$&1&0&3&3&8&8&13&11&6&11&25&33\\
$P_{9, 182}$&$i_{81}$&0&2&2&2&6&5&14&10&1&27&16&12\\
$P_{9, 183}$&(5, 4)&0&0&1&0&0&9&16&0&0&24&0&16\\
$P_{9, 184}$&$i_{110}$&1&0&4&6&0&0&9&3&6&24&28&4\\
$P_{9, 185}$&(5, 4)&0&0&1&0&0&9&16&0&0&24&0&16\\
$P_{9, 186}$&$i_{87}$&0&2&3&1&1&8&9&15&12&1&18&26\\
$P_{9, 187}$&$i_{110}$&1&0&4&6&0&0&9&3&6&24&28&4\\
$P_{9, 188}$&(7, 3)&0&0&0&1&0&12&0&1&0&0&4&1
\end{tabular}
\vskip5mm

\begin{tabular}{r||r||r|r|r|r|r|r|r|r|r|r|r|r}
graph&$-c_2(p)$&\multicolumn{12}{l}{$p=$}\\
&&\hspace{4pt}2&\hspace{4pt}3&\hspace{4pt}5&\hspace{4pt}7&11&13&17&19&23&29&31&37\\\hline
$P_{9, 189}$&(6, 4)&0&0&4&3&1&11&16&0&13&15&9&35\\
$P_{9, 190}$&$i_{104}$&1&0&2&6&1&9&2&9&8&9&28&18
\end{tabular}
\vskip5mm

\noindent Table 6: List of all $\phi^4$ prime ancestors up to loop order 9 and their $c_2$-invariants for the first 12 primes. The names of the graphs are taken from \cite{CENSUS},
the graphs of loop order 9 are only available in \cite{CENSUS1}. $-c_2$ is either 1, 0 $z_2$, $z_3$, $z_4$, (see Eq.\ (\ref{21})), modular with (weight, level), or an unidentified
sequence from Tab.\ 8.
\vskip5mm

\begin{tabular}{r||r||r|r|r|r|r|r||r|r|r|r|r|r}
no.&$-c_2(p)$&\multicolumn{6}{l||}{$p=$}&\multicolumn{6}{l}{loop order $\ell=$}\\
&&\hspace{4pt}2&\hspace{4pt}3&\hspace{4pt}5&\hspace{4pt}7&11&13&\hspace{4pt}3&\hspace{4pt}6&\hspace{4pt}7&\hspace{4pt}8&\hspace{4pt}9&10\\\hline
$i_{1}$&1&1&1&1&1&1&1&1&0&0&0&0&0\\
$i_{2}$&0&0&0&0&0&0&0&0&1&1&2&0&3\\
$i_{3}$&$z_2$&0&1&1&1&1&1&0&0&2&1&3&2\\
$i_{4}$&$z_3$&1&0&4&1&10&1&0&0&1&2&0&1\\
$i_{5}$&$z_4$&0&2&1&6&10&1&0&0&0&1&0&3\\
$i_{6}$&(3,7)&1&0&0&0&5&0&0&0&0&1&2&0\\
$i_{7}$&(3,8)&0&1&0&0&3&0&0&0&0&1&3&5\\
$i_{8}$&(3,12)&0&0&0&2&0&4&0&0&0&0&1&1\\
$i_{9}$&(4,5)&0&2&0&6&10&1&0&0&0&1&3&4\\
$i_{10}$&(4,6)&0&0&1&5&1&12&0&0&0&0&4&2\\
$i_{11}$&(4,7)&1&1&1&0&3&2&0&0&0&0&0&3\\
$i_{12}$&(4,13)&1&2&3&1&7&0&0&0&0&0&1&1\\
$i_{13}$&(4,17)&1&1&1&0&9&7&0&0&0&0&0&4\\
$i_{14}$&(5,4)&0&0&1&0&0&9&0&0&0&0&4&6\\
$i_{15}$&(6,3)&0&0&1&2&8&1&0&0&0&1&1&6\\
$i_{16}$&(6,4)&0&0&4&3&1&11&0&0&0&0&1&7\\
$i_{17}$&(6,7)&0&1&4&0&1&3&0&0&0&0&2&4\\
$i_{18}$&(6,10)&0&1&0&6&2&6&0&0&0&0&0&1\\
$i_{19}$&(7,3)&0&0&0&1&0&12&0&0&0&0&1&1\\
$i_{20}$&(8,2)&0&0&0&1&3&4&0&0&0&0&0&6\\
$i_{21}$&(8,5)&0&0&0&1&7&1&0&0&0&0&0&1
\end{tabular}
\vskip5mm

\noindent Table 7: List of identified $\phi^4$ $c_2$-invariants up to loop order 10 together with the number of their prime ancestors.
The first five $c_2$-invariants are quasi-constant, $i_{6}$ to $i_{21}$ are modular with (weight, level). There are no prime ancestors with  1, 2, 4, or 5 loops.
\vskip5mm

\footnotesize
\noindent
\begin{tabular}{r||r|r|r|r|r|r||r|rrr||r|r|r|r|r|r||r|rr}
no.&\multicolumn{6}{l||}{$p=$}&\multicolumn{2}{l}{$\ell=$}&&no.&\multicolumn{6}{l||}{$p=$}&\multicolumn{2}{l}{$\ell=$}\\
&\hspace{3pt}2&\hspace{3pt}3&\hspace{3pt}5&\hspace{3pt}7&11&13&\hspace{3pt}9&10&&&\hspace{3pt}2&\hspace{3pt}3&\hspace{3pt}5&\hspace{3pt}7&11&13&\hspace{3pt}9&10\\\hline
$i_{22}$&0&0&0&1&9&2&0&1&&$i_{62}$&0&1&3&1&1&9&0&1\\
$i_{23}$&0&0&0&2&6&7&0&1&&$i_{63}$&0&1&3&2&9&1&0&1\\
$i_{24}$&0&0&0&4&5&3&0&1&&$i_{64}$&0&1&3&3&2&10&0&1\\
$i_{25}$&0&0&1&0&0&0&0&1&&$i_{65}$&0&1&3&4&8&4&0&4\\
$i_{26}$&0&0&1&0&7&7&0&1&&$i_{66}$&0&1&3&5&0&12&0&1\\
$i_{27}$&0&0&1&2&3&2&0&1&&$i_{67}$&0&1&3&5&6&9&0&1\\
$i_{28}$&0&0&1&2&6&5&0&3&&$i_{68}$&0&1&3&5&8&9&0&1\\
$i_{29}$&0&0&1&4&6&10&0&1&&$i_{69}$&0&1&4&1&4&0&0&1\\
$i_{30}$&0&0&1&5&6&5&0&3&&$i_{70}$&0&1&4&2&1&0&0&3\\
$i_{31}$&0&0&2&2&5&12&0&1&&$i_{71}$&0&1&4&2&7&11&0&1\\
$i_{32}$&0&0&2&5&2&9&0&1&&$i_{72}$&0&1&4&3&0&8&0&1\\
$i_{33}$&0&0&2&5&9&11&0&1&&$i_{73}$&0&1&4&5&5&7&0&1\\
$i_{34}$&0&0&2&6&4&0&0&1&&$i_{74}$&0&2&0&0&1&8&0&1\\
$i_{35}$&0&0&2&6&10&0&0&1&&$i_{75}$&0&2&0&0&6&4&0&1\\
$i_{36}$&0&0&3&0&9&3&0&2&&$i_{76}$&0&2&0&0&9&2&0&1\\
$i_{37}$&0&0&3&2&1&2&0&2&&$i_{77}$&0&2&0&5&2&5&0&1\\
$i_{38}$&0&0&3&2&8&11&0&1&&$i_{78}$&0&2&0&6&2&0&0&1\\
$i_{39}$&0&0&3&3&2&10&0&1&&$i_{79}$&0&2&0&6&3&9&0&3\\
$i_{40}$&0&0&4&1&0&9&0&2&&$i_{80}$&0&2&2&2&0&9&0&1\\
$i_{41}$&0&0&4&2&2&8&0&1&&$i_{81}$&0&2&2&2&6&5&1&2\\
$i_{42}$&0&0&4&2&9&11&0&1&&$i_{82}$&0&2&2&3&1&2&0&1\\
$i_{43}$&0&0&4&3&0&11&0&1&&$i_{83}$&0&2&2&3&7&2&0&1\\
$i_{44}$&0&1&0&0&2&4&0&1&&$i_{84}$&0&2&2&5&6&4&0&1\\
$i_{45}$&0&1&0&1&4&5&0&1&&$i_{85}$&0&2&2&6&0&6&0&1\\
$i_{46}$&0&1&0&2&1&9&0&1&&$i_{86}$&0&2&3&0&6&5&0&1\\
$i_{47}$&0&1&0&2&6&9&0&1&&$i_{87}$&0&2&3&1&1&8&1&3\\
$i_{48}$&0&1&0&4&0&0&0&2&&$i_{88}$&0&2&3&3&2&0&0&1\\
$i_{49}$&0&1&0&4&4&0&0&1&&$i_{89}$&0&2&4&0&3&3&0&1\\
$i_{50}$&0&1&1&1&10&0&0&1&&$i_{90}$&0&2&4&3&4&7&0&3\\
$i_{51}$&0&1&1&2&0&9&0&1&&$i_{91}$&0&2&4&4&1&8&0&1\\
$i_{52}$&0&1&1&2&10&9&0&1&&$i_{92}$&0&2&4&4&10&12&0&1\\
$i_{53}$&0&1&1&4&6&6&1&0&&$i_{93}$&0&2&4&5&1&5&0&1\\
$i_{54}$&0&1&1&5&6&3&0&1&&$i_{94}$&0&2&4&6&1&8&0&1\\
$i_{55}$&0&1&1&6&0&5&0&1&&$i_{95}$&1&0&0&1&9&7&0&1\\
$i_{56}$&0&1&1&6&2&0&0&1&&$i_{96}$&1&0&0&3&6&2&0&1\\
$i_{57}$&0&1&2&1&9&2&0&1&&$i_{97}$&1&0&1&2&1&11&0&1\\
$i_{58}$&0&1&2&2&9&12&0&1&&$i_{98}$&1&0&1&2&4&9&0&1\\
$i_{59}$&0&1&2&3&4&10&0&1&&$i_{99}$&1&0&1&2&8&5&0&1\\
$i_{60}$&0&1&2&4&7&1&0&1&&$i_{100}$&1&0&1&2&9&8&0&1\\
$i_{61}$&0&1&3&1&0&11&0&1&&$i_{101}$&1&0&1&4&6&9&0&1
\end{tabular}

\noindent
\begin{tabular}{r||r|r|r|r|r|r||r|rrr||r|r|r|r|r|r||r|rr}
no.&\multicolumn{6}{l||}{$p=$}&\multicolumn{2}{l}{$\ell=$}&&no.&\multicolumn{6}{l||}{$p=$}&\multicolumn{2}{l}{$\ell=$}\\
&\hspace{3pt}2&\hspace{3pt}3&\hspace{3pt}5&\hspace{3pt}7&11&13&\hspace{3pt}9&10&&&\hspace{3pt}2&\hspace{3pt}3&\hspace{3pt}5&\hspace{3pt}7&11&13&\hspace{3pt}9&10\\\hline
$i_{102}$&1&0&2&1&10&5&0&1&&$i_{124}$&1&1&4&6&0&8&0&1\\
$i_{103}$&1&0&2&3&5&2&0&1&&$i_{125}$&1&2&0&2&6&5&1&1\\
$i_{104}$&1&0&2&6&1&9&1&1&&$i_{126}$&1&2&0&5&2&0&0&1\\
$i_{105}$&1&0&3&2&2&12&0&4&&$i_{127}$&1&2&0&5&5&8&0&1\\
$i_{106}$&1&0&3&3&8&8&1&5&&$i_{128}$&1&2&0&5&7&0&0&1\\
$i_{107}$&1&0&4&2&4&12&0&1&&$i_{129}$&1&2&1&0&5&10&0&1\\
$i_{108}$&1&0&4&4&1&11&0&1&&$i_{130}$&1&2&1&0&10&6&0&1\\
$i_{109}$&1&0&4&5&4&8&0&1&&$i_{131}$&1&2&1&2&5&1&0&1\\
$i_{110}$&1&0&4&6&0&0&2&5&&$i_{132}$&1&2&1&4&1&0&1&2\\
$i_{111}$&1&1&0&2&9&11&0&1&&$i_{133}$&1&2&1&4&3&1&0&2\\
$i_{112}$&1&1&0&6&2&9&0&1&&$i_{134}$&1&2&1&6&5&2&0&1\\
$i_{113}$&1&1&0&6&4&1&0&1&&$i_{135}$&1&2&1&6&10&8&0&1\\
$i_{114}$&1&1&1&1&1&8&0&5&&$i_{136}$&1&2&2&0&0&4&0&3\\
$i_{115}$&1&1&1&4&8&6&0&1&&$i_{137}$&1&2&2&3&1&8&0&1\\
$i_{116}$&1&1&1&5&2&8&0&1&&$i_{138}$&1&2&2&5&5&2&0&1\\
$i_{117}$&1&1&2&2&9&7&0&1&&$i_{139}$&1&2&2&5&6&4&0&1\\
$i_{118}$&1&1&2&4&0&12&0&3&&$i_{140}$&1&2&3&3&0&3&0&1\\
$i_{119}$&1&1&3&1&3&6&0&1&&$i_{141}$&1&2&3&3&0&9&1&1\\
$i_{120}$&1&1&3&2&1&12&0&2&&$i_{142}$&1&2&4&1&3&2&0&1\\
$i_{121}$&1&1&3&3&10&8&1&5&&$i_{143}$&1&2&4&5&1&6&0&1\\
$i_{122}$&1&1&3&6&8&6&0&1&&$i_{144}$&1&2&4&6&7&9&0&1\\
$i_{123}$&1&1&4&2&7&10&0&1&&$i_{145}$&1&2&4&6&9&10&0&2
\end{tabular}
\vskip5mm

\normalsize
\noindent Table 8: List of $-c_2$ for all unidentified $c_2$-invariants up to loop order 10 with the number of their prime ancestors.

\bibliographystyle{plain}

\begin{thebibliography}{99}
\bibitem{BB} {\bf P. Belkale, P. Brosnan}, {\it Matroids, motives and a conjecture of Kontsevich}, Duke Math.\ Journal, Vol.\ 116, 147-188 (2003).
\bibitem{BEK} {\bf S. Bloch, H. Esnault, D. Kreimer}, {\it On Motives Associated to Graph Polynomials}, Comm.\ Math.\ Phys.\ 267, 181-225 (2006).
\bibitem{BK} {\bf D. Broadhurst, D.  Kreimer}, {\it  Knots and numbers in $\phi^4$ theory to 7 loops and beyond}, Int.\ J. Mod.\ Phys.\ C6, 519-524 (1995).
\bibitem{B3} {\bf D. Broadhurst}, {\it Multiple zeta values and other periods in quantum field theory}, conference talk, Bristol, 4 May 2011.
\bibitem{BROWN} {\bf F. Brown}, {\it On the periods of some Feynman integrals},  arXiv:0910.0114v2 [math.AG], (2009).
\bibitem{BD} {\bf F. Brown, D. Doryn}, {\it Framings for graph hypersurfaces}, arXiv:1301.3056v1 [math.AG] (2013).
\bibitem{BS} {\bf F. Brown, O. Schnetz}, {\it A K3 in $\phi^4$}, Duke Math.\ Journal, vol.\ 161, no.\ 10 (2012).
\bibitem{BSY} {\bf F. Brown, O. Schnetz, K. Yeats}, {\it Properties of $c_2$ invariants of Feynman graphs}, arXiv:1203.0188v2 [math.AG] (2012).
\bibitem{BY} {\bf F. Brown, K. Yeats}, {\it Spanning forest polynomials and the transcendental weight of Feynman graphs}, Commun.\ Math.\ Phys.\ 301, 357-382 (2011).
\bibitem{CY} {\bf F. Chung, C. Yang}, {\it On polynomials of spanning trees}, Ann.\ Comb.\ 4, 13-25 (2000).
\bibitem{DORY1} {\bf D. Doryn}, {\it On one example and one counterexample in counting rational points on graph hypersurfaces}, arXiv:1006.3533v1 [math-AG] (2010).
\bibitem{IZ} {\bf J. Itzykson, J. Zuber}, {\it Quantum Field Theory}, Mc-Graw-Hill, (1980).
\bibitem{KIR} {\bf G. Kirchhoff}, {\it Ueber die Aufl\"osung der Gleichungen, auf welche man bei der Untersuchung der linearen Vertheilung galvanischer Str\"ome gef\"uhrt wird},
Annalen der Physik und Chemie 72, no.\ 12, 497-508 (1847).
\bibitem{KONT} {\bf M. Kontsevich}, Gelfand Seminar talk, Rutgers University, December 8, 1997.
\bibitem{Meyer} {\bf C. Meyer}, {\tt http://enriques.mathematik.uni-mainz.de/cm/}
\bibitem{NAU} {\bf B. McKay}, nauty, {\tt http://cs.anu.edu.au/$\sim$bdm/nauty} version 2.4$\beta$7 (2007).
\bibitem{Erik} {\bf E. Panzer}, {\it On the analytic computation of massless propagators in dimensional regularization}, arXiv:1305.2161v1 [hep-th] (2013).
\bibitem{Sage} {\tt http://www.sagemath.org/}
\bibitem{CENSUS1} {\bf O. Schnetz}, {\it Quantum periods: A census of $\phi^4$-transcendentals (version 1)}, arXiv: 0801.2856v1 [hep-th] (2008).
\bibitem{CENSUS} {\bf O. Schnetz}, {\it Quantum periods: A census of $\phi^4$ transcendentals}, Jour.\ Numb.\ Theory and Phys.\ 4 no.\ 1, 1-48 (2010).
\bibitem{SchnetzFq} {\bf O. Schnetz}, {\it Quantum field theory over $\FF_q$}, Electron.\ J. Comb.\ 18N1:P102, (2011).
\bibitem{STA} {\bf R. Stanley}, {\it Spanning Trees and a Conjecture of Kontsevich}, Ann.\ Comb.\ 2, 351-363 (1998).
\bibitem{STEM} {\bf J. Stembridge}, {\it Counting Points on Varieties over Finite Fields Related to a Conjecture of Kontsevich}, Ann.\ Comb.\ 2, 365-385 (1998).
\bibitem{etamod} {\tt http://lfunctions.org/degree2/degree2hm/eta2/eta2.html}
\end{thebibliography}
\renewcommand\refname{References}

\end{document}